\newtheorem{mythm}{Theorem}[section]
\newtheorem{mylem}[mythm]{Lemma}
\newtheorem{myrem}[mythm]{Remark}}
\newtheorem{myexam}{Example}[section]}
\newcommand{\ra}{\rightarrow}
\newcommand{\dis}{\displaystyle}
\def\R{\mathbb R}
\def\N{\mathbb N}
\def\C{\mathscr C}
\def\d{\text{\rm{d}}}
\def\E{\mathbb E}
\def\p{\mathbb P}\def\e{\text{\rm{e}}}
\def\q{\mathbb Q}
\def\la{\langle}
\def\raa{\rangle}
\def\veps{\varepsilon}
\def\de{\delta}
\def\C{\mathscr C}
\def\wt{\widetilde}
\newcommand{\fin}{\hfill $\square$\par}
\newenvironment{proof}{{\noindent\it Proof}\quad}{\hfill $\square$\par}
\numberwithin{equation}{section}
\begin{document}

\title{Weak convergence of Euler-Maruyama's approximation for SDEs under integrability condition\footnote{Supported in
 part by    NNSFs of China No. 11771327, 11431014}}

\author{Jinghai Shao\footnote{Email: shaojh@tju.edu.cn}\\[0.2cm]
Center for Applied Mathematics, Tianjin
University, Tianjin 300072, China}
\maketitle

\begin{abstract}
This work  establishes the weak convergence of Euler-Maruyama's approximation for stochastic differential equations (SDEs) with singular drifts under the integrability condition in lieu of the widely used growth condition. This method is based on a skillful application of the dimension-free Harnack inequality. Moreover, when the drifts satisfy certain regularity conditions, the convergence rate is estimated. This method is also applicable when the diffusion coefficients are degenerate. A stochastic damping Hamiltonian system is studied as an illustrative example.
\end{abstract}
\noindent AMS subject Classification:\ 60H35, 65C05, 65C30

\noindent\textbf{Key words}: Euler-Maruyama's approximation, Singular drifts, Weak convergence

\section{Introduction}

Numerical approximation plays important role in the application of stochastic differential equations. According to Hutzenthaler, Jentzen and Kloeden \cite{HJK11,HJK13}, Euler-Maruyama's (EM's) method may diverge to infinity both in the strong and numerically weak sense provided the coefficients of the SDE grow superlinearly. However,  the recently widely studied SDEs with singular coefficients may not satisfy any growth condition or Lyapunov type condition. To define appropriate EM's approximation of such kind of SDEs and prove its weak convergence are the main topic of current work.

SDEs with singular coefficients  have been extensively studied recently. See, for instance, \cite{BC03,GM01,KR05,Zh05,Zh11} for nondegenrate SDEs; see \cite{Ch16,DR02,WZ15,WZ16} for extensions to degenerate SDEs.
In the seminal work \cite{KR05}, Krylov and R\"ockner proved the existence and uniqueness of strong solution to the following SDE:
\begin{equation}\label{1.1}
\d X(t)=b(t,X(t))\d t+\d W(t),\quad X(0)=x_0\in\R^d,
\end{equation}
where $b\in \mathbb L_p^q:=L^q(\R_+;L^p(\R^d))$ with $p,\,q>1$ and $\frac{d}{p}+\frac{2}{q}<1$.
Subsequently, this result was extended to the case of multiplicative noises in \cite{Zh05,Zh11}. See also \cite{FF11,GM01} for related results. Recently, Wang \cite{Wa17} investigated the SDEs with singular coefficients under integrability conditions by using the dimension-free Harnack inequality. This method can deal with the case wherein the singular drift $b$ may not belong to $L_p^q$. For instance, consider the drift
\begin{equation}\label{1.2}
 b(t,x)=b(x)=\Big\{\sum_{n=1}^\infty \log \Big(1+\frac{1}{|x-n|^2}\Big)\Big\}^{\frac 12}- x,\quad x\in\R.
\end{equation} Then SDE \eqref{1.1} admits a unique nonexplosive strong solution (cf. \cite[Theorem 2.1]{Wa17}).
It is obvious that $b(t,x)$ in \eqref{1.2} does not satisfy any growth condition or Lyapunov type condition.

Despite the extensive investigation of SDE \eqref{1.1} with singular coefficients, there are very limited results on the numerical approximation of such systems. In this direction, Zhang
\cite[Theorem 1.1]{Zh16} established a sensitivity  estimation when the drift of such system is perturbed in the space $\mathbb L_p^q$. His method is based on the well-known Krylov's estimate and Zvonkin's transformation; see, for instance, \cite{FF11,KR05,Zh05,Zh11,Zh16} and references therein.
Nevertheless, this method is not applicable to handle directly  EM's  scheme because of the presence of time delay. Intuitively, without time delay, the basic idea to derive Krylov's estimate is that: for $0\leq s<t$,
\begin{align*}
\E\Big[\int_s^tf(r,W(r))\d r\Big|\mathscr F_s\Big]&=\E\Big[\int_s^t f(r,W(s)+W(r)-W(s))\d r\Big|\mathscr F_s\Big]\\
&=\int_s^t \big(2\pi (r-s)\big)^{-\frac d2}\Big(\int_{\R^d} f(r,x+y)\e^{-\frac{|y|^2}{2(r-s)}}\d y\Big)\d r\Big|_{W(s)=x}\\
&\leq N(t-s)^{1-\frac 1q-\frac 2{2p}}\Big(\int_0^\infty\Big(\int_{\R^d}|f(r,y)|^p\d y\Big)^{q/p}\d r\Big)^{1/q}
\end{align*}
for some  $N>0$, where $(W(t))$ is a Brownian motion. Whereas, in the presence of time delay,
\begin{align*}
  \E\Big[\int_s^t f(r,W(\big[\frac{r}{\de}\big]\de))\d r\Big|\mathscr F_s\Big]&=\int_s^t f(r, W(k\de))\d r
\end{align*} for $s,t\in [k\de, (k+1)\de)$, $k\in\N$ and $\de>0$. Thus, it is impossible to get a nonrandom interval function $\rho(s,t)$ such that
\begin{equation*}
\E\Big[\int_s^t f(r,W(\big[\frac{r}{\de}\big]\de))\d r\Big|\mathscr F_s\Big]\leq \rho(s,t)
\end{equation*} for all $0\leq s<t$, so that Krylov's estimate cannot be established without this necessary condition.

Under global Lipschitz condition, it is a classical result that EM's approximation converges strongly to its exact solution (see, e.g. \cite{KP92,Mil95}). Also, under the one-sided Lipschitz condition rather than the global Lipschitz condition, there are also many works about the strong convergence of EM's approximation, which can deal with SDEs with possibly superlinearly growing coefficients (see, e.g.  \cite{HMS02,MS13} and references therein).
Generally, the linear growth condition plays a crucial role in the study of EM's approximation thanks to the observations of  Hutzenthaler et al.   \cite[Theorem 2.1]{HJK11} and
\cite[Theorem 2.1]{HJK13}. There are also many literatures on numerical approximation of SDEs with irregular coefficients. For instance, Yan \cite{Yan02} for SDEs with possibly discontinuous coefficients but satisfying the linear growth condition; Kohatsu-Higa et al. \cite{KMN} for SDEs with bounded H\"older continuous drifts; Ngo and Taguchi \cite{NT} for SDEs with H\"older continuous drifts and satisfying the sub-linear growth condition. If the coefficients are bounded, Kozhina \cite{Koz} performed the sensitivity analysis of the densities of degenerate diffusion processes based on the parametrix expansions of the underlying densities. But the methods of previous mentioned works cannot cope with the SDEs with singular drifts such as \eqref{1.2} or in $\mathbb L_p^q$. According to the characteristics of $b$, it is natural to impose the integrability condition to study the approximation of EM's scheme.

The convergence rate of EM's scheme has been investigated for various convergence criteria: for convergence rate of the expectation of functionals of solutions of SDEs with smooth coefficients, see Talay and Tubaro \cite{TT90}; for convergence rate of the distribution function, see Bally and Talay \cite{BT96}; for convergence rate of the density, see
Bally and Talay \cite{BT96b}, Konakov and Menozzi \cite{KM17}, and Konakov et al. \cite{KKM}; for convergence rate in the Wasserstein distance, see Alfonsi et al. \cite{AJK}. Refer to Kloeden and Platen \cite{KP92} for reviews.


The purpose of this work is to prove the convergence of EM's approximation for SDEs with singular drifts. To this end, there are mainly two issues needing to be addressed. First, in order that EM's approximation of a SDE with singular drift is well-defined, it is necessary to implement certain regularization on the drift such that the drift is well-defined everywhere. As a consequence, one has to measure the difference between these two processes before and after performing regularization.

For example, concerning the drift $b(t,x)$ given in \eqref{1.2},
this drift $b$ is not well defined on the set $\N$. Therefore,
if the initial value of SDE \eqref{1.1} happens to be some $k\in\N$,  its usual EM's approximation cannot be defined. Consequently, one has to complement the definition of $b$. For instance,
one can define a mollification of $b$. To this end, let $\psi\in C^\infty(\R^d;\R_+)$ satisfy $\int_{\R^d}\psi(y)\d y=1$ and $\mathrm{supp}(\psi)\subset K$ for some compact set $K\subset \R^d$. For $\veps>0$, let
\begin{equation}\label{1.3}
\begin{split}
Z(t,x)&=\Big\{\!\sum_{n=1}^\infty \log \Big(1\!+\!\frac{1}{|x-n|^2}\Big)\Big\}^{\frac 12},\quad \psi_{\veps}(x)=\veps^{-d}\psi(x/\veps),\\
b_\veps(t,x)&=\big(Z(t,\cdot)\ast\psi_\veps\big)(x)\!-\! x=\int_{\R^d}Z(t,y)\psi_\veps(x-y)\d y\!-\! x.
\end{split}
\end{equation}
Note that when $b$ is inhomogeneous, we  may need a mollification in time as well.
Associated with $b_\veps$, consider the following SDE:
\begin{equation}\label{1.4}
\d X_\veps(t)=b_\veps(t,X_\veps(t))\d t+\d W(t),\quad X_\veps(0)=x_0.
\end{equation}
Therefore, the first issue is to justify whether $(X_\veps(t))$ converges to $(X(t))$ in appropriate sense. Given  $b$ and $\tilde b$   in $\mathbb L_p^q$ with $p,q>1$ and $\frac{d}{p}+\frac{2}{q}<1$, let $(X^b(t))$, $(X^{\tilde b}(t))$ be the solutions to SDE \eqref{1.1} associated with the drifts $b$ and $b'$ respectively. Then, Zhang \cite{Zh16} showed that
\begin{equation}\label{e-z}\E\big[\sup_{0\leq t\leq T}|X^b(t)-X^{\tilde b}(t)|^2\big]\leq C\|b-\tilde b\|_{\mathbb L_p^q(0,T)}^2
\end{equation}
for some constant $C>0$. This is useful in estimating the convergence of $(X_\veps(t))$ to $(X(t))$ when $b$ belongs to some $\mathbb L_p^q$.  Whereas, as far as the drift \eqref{1.2} is concerned, both this estimate \eqref{e-z} and its method used in \cite{Zh16} do not work any more. Note that the constant $C$ in \eqref{e-z} depends on the norms of $b$ and $\tilde b$ in $\mathbb L_p^q(0,T)$, which restricts the application of \eqref{e-z} to the situation that $b-\tilde b\in \mathbb L_p^q(0,T)$ without assuming $b,\,\tilde{b}\in \mathbb L_p^q(0,T)$. In this work, we shall provide a new estimate of this difference between $(X_\veps(t))$ and $(X(t))$ in terms of the Wasserstein distance.

Second, after the regularization, EM's approximation of $(X_\veps(t))$ is determined by
\begin{equation}\label{1.5}
\d   X_\veps^\de(t)=  b_\veps([t/\de]\de,   X_\veps^\de([t/\de]\de))\d t+\d W(t),\quad  X_\veps^\de(0)=x_0,\ \de>0,
\end{equation} which is well-defined for every initial value $x_0$. The second issue is to study the convergence of $(X_\veps^\de(t))$ to $(X_\veps(t))$ as $\de \ra 0$ and to estimate its convergence rate. During this procedure, attention should be paid to the nonexplosion of $(X^\de_\veps(t))$ without the linear growth condition or one-sided Lipschitz condition.
Herein, we shall study the weak convergence of EM's scheme, i.e. for any bounded measurable function $f$, the convergence of $\E[f(X_\veps^\de(t))]$ to $\E[f(X_\veps(t))]$ as $\de\ra 0$.  This weak convergence and its convergence rate have been studied by many papers and have important application in statistics and mathematical finance.  Moreover, weak convergence can also be characterized by the Wasserstein distance and Fortet-Mourier distance (see, \cite[Chapter 6]{Vi09}). In this work, we would like to present our results in terms of   Fortet-Mourier distance.

Our strategy of this work is to apply the dimension-free Harnack inequality, initiated due to Wang \cite{Wa97}, together with the Girsanov theorem to investigate the weak convergence of EM's approximation.  Recently, Wang \cite{Wa17} used the local Harnack inequality to provide some integrability conditions to ensure the nonexplosion of SDEs with singular drifts and provide more regularity estimates of the invariant probability measures. Subsequently, this method was developed to deal with stochastic partial differential equations with singular and path-dependent drifts in \cite{Wa18}.
Compared with the existing methods of studying weak convergence of EM's scheme, our method owns two advantages. First, in order to ensure the weak convergence, it is sufficient to impose an integrability condition w.r.t. a  nice probability measure in lieu of the growth condition. Second, for SDEs with degenerate diffusion coefficients, this method remains applicable in showing the convergence of EM's scheme under integrability condition.

The remainder of this paper is organized as follows. The main results are presented in Section 2. We consider separately the non-degenerate case and the degenerate case.  All the proofs are presented in Section 3.

\section{Main results}
\subsection{Non-degenerate Case}
Let $(\Omega,\mathcal F,\p)$ be a probability space with a normal filtration $(\mathcal F_t)_{t\geq 0}$ and let $(W(t))_{t\geq 0}$ be a $d$-dimensional standard $\mathcal F_t$-Brwonian motion.
Consider the SDE
\begin{equation}\label{2.1}
\d X(t)=b(t,X(t))\d t+\sigma\d W(t),\qquad X(0)=x_0\in\R^d,
\end{equation} where $(W(t))$ is a $d$-dimensional Brownian motion, and $b:\R_+\times\R^d\ra \R^d$, $\sigma\in \R^{d\times d}$.  The drift $b$ is assumed to be well-defined for every $(t,x)\in\R_+\times\R^d$. $\sigma$ is a deterministic $d\!\times\!d$ matrix satisfying the condition:
\begin{itemize}
   \item[$\mathrm{(H_\sigma)}$] There exists a constant $\lambda>0$ such that $\lambda^{-1}|x|^2\leq |\sigma x|^2\leq \lambda |x|^2$ for all $x\in\R^d$.
 \end{itemize}
Denote $a=\sigma\sigma^\ast=(a_{ij})_{1\leq i,j\leq d}$, where $\sigma^\ast$ stands for the transpose of the matrix $\sigma$. Throughout this work, SDE \eqref{2.1} is assumed to admit a unique weak solution.
Let $(X_\delta(t))$, $\delta>0$, be EM's approximation of $(X(t))$ given by
\begin{equation}\label{2.2}
\d X_\delta(t)=b(t_\delta,X_\de(t_\de))\d t+\sigma \d W(t),\qquad X_\de(0)=x_0,
\end{equation}
where $t_\de=[t/\de]\de$, and $[t/\de]$ denotes the integral part of $t/\de$.

To proceed, we introduce some notation. For $V\in C^2(\R^d)$, define $\mu_0(\d x)=\e^{-V(x)} \d x$ and
\begin{equation}\label{2.3}
 Z_0=-\sum_{i,j=1}^d( a_{ij}\partial_j V) e_i,
\end{equation}
where $\{e_i\}_{i=1}^d$ is the canonical orthonormal basis of $\R^d$ and $\partial_i$ is the directional derivative along $e_i$.
Let
\begin{equation}\label{2.4}
\mathscr V=\Big\{V\!\in \!C^2(\R^d);\ \mu_0(\R^d)=1, \begin{array}{l}\exists\,K_0>0, \,|Z_0(x)\!-\!Z_0(y)| \leq K_0|x\!-\!y|^2\ \ \forall\,x,y\in\R^d   
 \end{array}
\Big\}.
\end{equation}
The Lipschitz condition in \eqref{2.4} is used to establish the Harnack inequality. For $V\in\mathscr V$, define
\begin{equation}\label{2.5}
L_0=\mathrm{tr}(a\nabla^2)+Z_0\cdot \nabla=\sum_{i,j=1}^da_{ij}\partial_i\partial_j+\sum_{i=1}^d\la Z_0,e_i\raa \partial_i.
\end{equation}
By the integration by parts formula, $L_0$ is symmetric in $L^2(\mu_0)$. Then
\[\mathscr E_0(f,g):=\mu_0\big(\la\nabla f,\nabla g\raa\big),\qquad f,g\in H_\sigma^{2,1}(\mu_0)\]
is a symmetric Dirichlet form generated by $L_0$, where $H_{\sigma}^{2,1}$ is the closure of $C_0^{\infty}(\R^d)$ under the following norm
\[\|f\|_{H_\sigma^{2,1}(\mu_0)}:=\big\{\mu_0(|f|^2+|\sigma^\ast \nabla f|^2)\big\}^{\frac 12}.\]
Also, for each $V\in \mathscr V$, it is associated with an auxiliary process $(Y(t))$ determined by
\begin{equation}\label{2.6}
\d Y(t)=Z_0(Y(t))\d t+\sigma \d W(t),\quad Y(0)=x_0.
\end{equation} For $V\in\mathscr V$, $Z_0$ is globally Lipschitz continuous, hence,  $(Y(t))$ is nonexplosive (cf. e.g. \cite{Mao}).

The Wasserstein distance between two probability measures  $\mu,\,\nu$  on $\R^d$ is defined by
\[W_1(\mu,\nu)=\inf\Big\{\int_{\R^d\times\R^d} |x-y|\pi(\d x,\d y);\ \pi\in\C(\mu,\nu)\Big\},
\]where $\C(\mu,\nu)$ denotes the collection of all probability measures $\pi$ on $\R^d\times\R^d$ such that $\pi(A\times \R^d)=\mu(A)$ and $\pi(\R^d\times B)=\nu(B)$ for all Borel measurable sets $A$, $B$ on $\R^d$.
According to the Kantorovich-Rubinstein Theorem (cf. \cite[Theorem 1.14]{Vi}),
\begin{equation}\label{wass}
\begin{aligned}
  W_1(\mu,\nu)=\sup\Big\{\int_{\R^d}\phi\, \d \mu-\int_{\R^d}\phi\,\d\nu;\ \|\phi\|_{\mathrm{Lip}}\leq 1,\,\phi \ \text{is bounded}\Big\},
\end{aligned}
\end{equation} where $\|\phi\|_{\mathrm{Lip}}=\sup_{x,y,\in \R^d,x\neq y}\frac{|\phi(x)-\phi(y)|}{|x-y|}$. For a sequence of probability measures $(\mu_n)$ on $\R^d$, $\mu_n$ converges in the Wasserstein distance $W_1$ to some probability measure $\mu$ on $\R^d$ is equivalent to that $\mu_n$ weakly converges to $\mu$ and $\int_{\R^d}|x|\mu_n(\d x)$ converges to $\int_{\R^d}|x|\mu(\d x)$. See \cite{Vi} or \cite{RR} for more researches on the Wasserstein distance. We denote by $\mathscr{L}(\zeta)$ the distribution of a random variable $\zeta$, and by $\mu(f)$ the integration of function $f$ w.r.t. the measure $\mu$, i.e. $\mu(f)=\int_{\R^d}f(x)\mu(\d x)$. A closely related distance is the Fortet-Mourier distance (also called bounded Lipschitz distance):
\begin{equation}\label{bL-dis}
\begin{aligned}
W_{bL}(\mu,\nu)=\sup\Big\{\int_{\R^d}\phi\,\d \mu-\int_{\R^d}\phi\,\d\nu;\ \|\phi\|_{\mathrm{Lip}}+\|\phi\|_\infty\leq 1\Big\}.
\end{aligned}
\end{equation}
The Fortet-Mourier distance can also characterize the weak convergence of the probability measure space (cf. \cite[Chapter 6]{Vi09a}).

\begin{mythm}\label{t-0}
Let $(X(t))$ be the solution to SDE \eqref{2.1} and  $(\wt X (t))$ be the solution to the following SDE:
\begin{equation}\label{u-1}
\d \wt X(t)=\tilde b(t,\wt X(t))\d t+\sigma \d W(t),\quad \wt X(0)=x_0,
\end{equation}
where $\tilde b:\R_+\times\R^d\ra \R^d$.
Suppose condition $(H_\sigma)$ holds.  Let $T>0$ be given. Assume there exists $V\in\mathscr V$ such that its associated vector $Z_0$ determined by \eqref{2.3}, $Z(t,x):=b(t,x)-Z_0(x)$ and $\wt Z(t,x):=\tilde b(t,x)-Z_0(x)$ satisfy the following condition:
\begin{itemize}
  \item[$\mathrm{(H1)}$] There exists  a constant  $\eta>2\lambda Td$ such that
      \[ \sup_{t\in[0,T]}\mu_0\big(\e^{\eta |Z(t,\cdot)|^2}\big)<\infty, \  \text{and}\ \sup_{t\in[0,T]}\mu_0\big(\e^{\eta |\wt Z(t,\cdot)|^2}\big)<\infty.\]
\end{itemize}
Then, for every $\xi>d$, there exists a constant $C=C(K_0,T,\lambda,\xi,\eta)$ such that
  \begin{equation}\label{u-2}
  \begin{split}
   \sup_{t\in [0,T]} W_{bL}(\mathscr L(X(t)),\mathscr L(\wt X(t)))
  \leq C\Big\{\int_0^T\!\frac{\mu_0(|Z-\wt Z|^{q_0\xi}(s,\cdot))^{\frac 1\xi}}{(1-\e^{-K_0s})^{\frac{d}{\xi}}}\d s\Big\}^{\frac 1{q_0}},
  \end{split}
  \end{equation} where $q_0=p_0/(p_0-1)$, $p_0=\sqrt{\frac{\eta}{2\lambda Td}}\wedge 2$.
\end{mythm}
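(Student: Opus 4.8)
The plan is to compare both $X(t)$ and $\wt X(t)$ with the auxiliary process $Y(t)$ from \eqref{2.6} via the Girsanov theorem, and then to bound the two Radon–Nikodym densities using condition $\mathrm{(H1)}$ together with the dimension-free Harnack inequality for the reference semigroup generated by $L_0$. Concretely, writing $X(t) = Y(t)$ under a change of measure, the Girsanov density is $R_T = \exp\bigl(\int_0^T \langle \sigma^{-1}Z(t,Y(t)), \d W(t)\rangle - \tfrac12\int_0^T |\sigma^{-1}Z(t,Y(t))|^2\d t\bigr)$, and similarly $\wt R_T$ with $\wt Z$ in place of $Z$. Then for a test function $\phi$ with $\|\phi\|_{\mathrm{Lip}}+\|\phi\|_\infty\le 1$ one has $\E[\phi(X(t))]-\E[\phi(\wt X(t))] = \E\bigl[\phi(Y(t))(R_t-\wt R_t)\bigr]$, so the whole problem reduces to estimating $\E|R_t - \wt R_t|$, or rather a suitably localized/conditioned version of it. The exponential integrability in $\mathrm{(H1)}$, with the margin $\eta > 2\lambda T d$, is exactly what guarantees that $R_T, \wt R_T$ are genuine (uniformly integrable) densities and have controlled $L^{p_0}$-type moments; this is where the threshold $p_0 = \sqrt{\eta/(2\lambda T d)}\wedge 2$ enters — it is the largest exponent for which $\mu_0(\e^{p_0 \lambda T d |Z|^2})$ stays finite after accounting for the Gaussian/Harnack cost of transporting the bound along the flow.

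The second ingredient is to turn the integrability of $Z-\wt Z$ with respect to $\mu_0$ (a static measure on $\R^d$) into an estimate on $\E[\,|Z-\wt Z|^{\cdots}(s, Y(s))\,]$, an expectation along the path of $Y$. This is precisely the point where the dimension-free Harnack inequality of Wang type is invoked: the law of $Y(s)$ has a density with respect to $\mu_0$ (or with respect to $\mu_0$-reversible heat kernel), and the Harnack inequality gives a pointwise bound on that density that blows up like $(1-\e^{-K_0 s})^{-d}$ as $s\downarrow 0$ — accounting for the factor $(1-\e^{-K_0 s})^{-d/\xi}$ in \eqref{u-2} after a Hölder interpolation with exponent $\xi > d$. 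Quantitatively: $\E[g(Y(s))] \le C (1-\e^{-K_0 s})^{-d} \mu_0(g)$ for suitable $g\ge 0$ (or the $\xi$-version $\E[g(Y(s))]\le C(1-\e^{-K_0 s})^{-d/\xi}\mu_0(g^\xi)^{1/\xi}\cdot(\text{bounded factor})$), with the Lipschitz constant $K_0$ from \eqref{2.4} controlling the rate.

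So the key steps, in order, are: (i) fix $V\in\mathscr V$ and set up the Girsanov transforms carrying $Y$ to $X$ and to $\wt X$, checking the densities are true probability densities using $\mathrm{(H1)}$; (ii) write the difference of expectations of a bounded $1$-Lipschitz $\phi$ as $\E[\phi(Y(t))(R_t - \wt R_t)]$ and bound $|R_t - \wt R_t|$ by (a constant times) $R_t^{\alpha}+\wt R_t^{\alpha}$ times a term involving $\int_0^t |Z-\wt Z|(s,Y(s))(\cdots)\,\d s$, via the elementary inequality $|\e^a - \e^b|\le (\e^a+\e^b)|a-b|$ combined with a stochastic-integral/BDG estimate for the exponents; (iii) apply Hölder in $\omega$ with the conjugate pair $(p_0,q_0)$, using $\mathrm{(H1)}$ and the change of measure back to $Y$ to control the $R_t^{p_0\alpha}$-type factor, leaving a $q_0$-power of $\int_0^t \E^{1/\cdots}[\,|Z-\wt Z|^{q_0}(s,Y(s))\,]\,\d s$; (iv) apply the Harnack-based density bound plus Hölder with exponent $\xi$ to replace the expectation along $Y(s)$ by $\mu_0(|Z-\wt Z|^{q_0\xi}(s,\cdot))^{1/\xi}$ at the price of $(1-\e^{-K_0 s})^{-d/\xi}$, and integrate; (v) take the supremum over $\phi$ to get $W_{bL}$, and over $t\in[0,T]$.

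I expect the main obstacle to be step (iii)–(iv): organizing the Hölder exponents so that the $Z-\wt Z$ term appears with exactly power $q_0\xi$ inside $\mu_0$ while the "cost" factors (the exponential-moment term controlled by $\mathrm{(H1)}$ and the Harnack density blow-up) remain finite and integrable in $s$ near $0$ — this forces the precise definitions $q_0 = p_0/(p_0-1)$, $p_0 = \sqrt{\eta/(2\lambda T d)}\wedge 2$, and $\xi>d$, and getting the bookkeeping of these constants to close is the delicate part. A secondary subtlety is that the reference process $Y$ and its semigroup must be the right ones for the Harnack inequality to hold with the stated rate $K_0$; the condition $V\in\mathscr V$ (in particular $|Z_0(x)-Z_0(y)|\le K_0|x-y|^2$ from \eqref{2.4}) is designed to supply exactly that, so invoking Wang's Harnack inequality in the appropriate form is routine once the setup is in place.
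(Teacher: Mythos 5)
Your proposal is correct and follows essentially the same route as the paper: Girsanov representations of both $X$ and $\wt X$ through the auxiliary process $Y$ (with Novikov's condition verified from $\mathrm{(H1)}$ via the Harnack inequality), the elementary bound $|\e^a-\e^b|\le(\e^a+\e^b)|a-b|$ followed by H\"older with the pair $(p_0,q_0)$ and a BDG estimate, and finally the Harnack-based bound $\E[G(s,Y(s))]\le C\,\mu_0(G^{\xi}(s,\cdot))^{1/\xi}(1-\e^{-K_0 s})^{-d/\xi}$ to convert path expectations of $|Z-\wt Z|$ into the weighted $\mu_0$-norm. The only cosmetic difference is that the paper works with the densities at time $T$ rather than at time $t$ (equivalent by conditioning) and obtains the exponent $q_0$ by a limiting argument $\gamma\to1$, $q\to q_0$ after an auxiliary H\"older split of the bracket term.
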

\begin{myrem} According to \cite[Theorem 2.1]{Wa17}, SDEs \eqref{2.1} and \eqref{u-1} admit a unique solution under the condition (H1).
The right hand side of \eqref{u-2} can be viewed as a weighted norm of $Z-\wt Z$ on $[0,\infty)\times\R^d$ w.r.t. the finite measure $\vartheta_\xi(t)\d t\times \mu_0(\d x)$, where
$\vartheta_\xi(t)=\frac{1}{(1-\e^{-K_0t})^{\frac{d}{\xi}}}$ is an integrable function on $[0,\infty)$ for $\xi>d$.
\end{myrem}

\begin{mythm}\label{t-1}
Assume $\mathrm{(H_\sigma)}$ holds. Let $T>0$ be given. Assume there exists $V\in\mathscr V$ such that $(t,x)\mapsto Z(t,x):=b(t,x)-Z_0(x)$ is continuous, where $Z_0$ is associated with $V$  by \eqref{2.3}.
Suppose
\begin{itemize}
  \item[$\mathrm{(H2)}$] there exist constants $\eta_0>0$ and $\eta>4\lambda Td$ such that
      \[\mu_0\big(\e^{ \eta_0 |Z_0|^2}\big)<\infty\quad \text{and}\quad \sup_{t\in[0,T]}\mu_0\big(\e^{\eta |Z(t,\cdot)|^2}\big)<\infty.\]
\end{itemize}
Then, for any bounded measurable function $f$ on $\R^d$, it holds
\begin{equation}\label{2.7}
\lim_{\de\ra 0}\E[f(X_\de(t))]=\E[f(X(t))],\quad t\in [0,T],
\end{equation}
and
\begin{equation}\label{2.7.5}
\lim_{\de\ra 0}W_{bL}(\mathscr L(X_\de(t)), \mathscr L(X(t)))=0,\quad t\in[0,T].
\end{equation}
\end{mythm}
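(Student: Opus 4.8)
The plan is to run the Girsanov transform from the auxiliary process $(Y(t))$ of \eqref{2.6} and its own Euler--Maruyama scheme, to reduce \eqref{2.7.5} to an $L^1$-convergence of stochastic exponentials, and then to upgrade this to the total-variation statement \eqref{2.7} by means of the Harnack inequality.

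First I would introduce $(Y_\de(t))$ solving $\d Y_\de(t)=Z_0(Y_\de(t_\de))\,\d t+\sigma\,\d W(t)$, $Y_\de(0)=x_0$, and set $\beta(s)=\sigma^{-1}Z(s,Y(s))$, $\beta_\de(s)=\sigma^{-1}Z(s_\de,Y_\de(s_\de))$ with $Z(t,x)=b(t,x)-Z_0(x)$. Since $u\mapsto\beta_\de(u)$ is constant and $\mathcal F_{k\de}$-measurable on each interval $[k\de,(k+1)\de)$, conditioning on $\mathcal F_{k\de}$ and using the Gaussian Laplace transform shows, interval by interval, that $N^\de_t:=\mathcal E\big(\int_0^\cdot\la\beta_\de(s),\d W(s)\raa\big)_t$ is a genuine $(\mathcal F_t)$-martingale with $\E[N^\de_t]=1$; under $\mathrm{(H2)}$ the same holds for $N_t:=\mathcal E\big(\int_0^\cdot\la\beta(s),\d W(s)\raa\big)_t$ (essentially \cite[Theorem 2.1]{Wa17} together with the assumed weak well-posedness of \eqref{2.1}). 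A Girsanov change of measure with density $N^\de_t$ turns $(Y_\de(t))$ into a solution of the Euler scheme \eqref{2.2}, and with density $N_t$ turns $(Y(t))$ into a weak solution of \eqref{2.1}; hence, by uniqueness in law,
\[ \E[f(X_\de(t))]=\E\big[f(Y_\de(t))\,N^\de_t\big],\qquad \E[f(X(t))]=\E\big[f(Y(t))\,N_t\big] \]
for every bounded measurable $f$ and every $t\in[0,T]$.

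The next step is to prove $N^\de_t\to N_t$ in $L^1(\p)$. As $Z_0$ is globally Lipschitz, the classical theory gives $\E[\sup_{s\le T}|Y_\de(s)-Y(s)|^2]\to0$, and adding the drift and Brownian displacements over the last partial step yields $Y_\de(s_\de)\to Y(s)$ uniformly in $s$ a.s.; with the continuity of $Z$ and the a.s.\ finiteness of $\sup_\de\sup_{s\le T}|Y_\de(s)|$, dominated convergence gives $\int_0^t|\beta_\de(s)-\beta(s)|^2\,\d s\to0$ a.s., so $N^\de_t\to N_t$ in probability. For uniform integrability one needs $\sup_\de\E[(N^\de_t)^p]<\infty$ for some $p>1$; by Cauchy--Schwarz and the martingale property this reduces to $\sup_\de\E\big[\exp\big(c\int_0^t|Z(s_\de,Y_\de(s_\de))|^2\,\d s\big)\big]<\infty$ for some $c>0$, and, after Jensen's inequality in $s$, to bounding $\E\big[\e^{\alpha|Z(s_\de,Y_\de(s_\de))|^2}\big]=\mu_0\big(\e^{\alpha|Z(s_\de,\cdot)|^2}h^\de_{s_\de}\big)$ with $h^\de_u:=\d\mathscr L(Y_\de(u))/\d\mu_0$. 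Here $\mathrm{(H2)}$ supplies $\sup_{u\le T}\mu_0(\e^{\eta|Z(u,\cdot)|^2})<\infty$, while a Harnack-type inequality for the Euler scheme with constants independent of $\de$ yields $\|h^\de_u\|_{L^{m'}(\mu_0)}\le C(1-\e^{-K_0u})^{-\gamma}$ with $\gamma<1$; a H\"older splitting whose exponents are matched to the threshold $\eta>4\lambda Td$ then closes the estimate, the few initial intervals (on which $\mathscr L(Y_\de(s_\de))$ is a point mass or a narrow Gaussian) being handled separately and contributing $o(1)$. Thus $N^\de_t\to N_t$ in $L^1$, and since $f(Y_\de(t))\to f(Y(t))$ a.s.\ for bounded continuous $f$, we get $\E[f(X_\de(t))]\to\E[f(X(t))]$ for such $f$, i.e.\ \eqref{2.7.5}.

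To obtain \eqref{2.7} for merely bounded measurable $f$, I would combine the Harnack inequalities for $(Y(t))$ and $(Y_\de(t))$ with the $L^p$-bounds on $N_t,N^\de_t$ and H\"older's inequality to show that $\mathscr L(X(t))$ and $\mathscr L(X_\de(t))$ are absolutely continuous w.r.t.\ $\mu_0$, with densities bounded in $L^{p_0}(\mu_0)$ uniformly in $\de$, for some $p_0>1$. Given $\|f\|_\infty\le1$, choose bounded continuous $f_k$ with $\|f_k\|_\infty\le1$ and $f_k\to f$ in $L^{q_0}(\mu_0)$, $q_0=p_0/(p_0-1)$; then $|\E[(f-f_k)(X_\de(t))]|\le\|f-f_k\|_{L^{q_0}(\mu_0)}\sup_\de\|\d\mathscr L(X_\de(t))/\d\mu_0\|_{L^{p_0}(\mu_0)}$, and likewise for $X(t)$, while $\E[f_k(X_\de(t))]\to\E[f_k(X(t))]$ by \eqref{2.7.5}; letting $\de\to0$ and then $k\to\infty$ yields \eqref{2.7}. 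The main obstacle is precisely this uniform-in-$\de$ control: proving a dimension-free Harnack inequality for the Euler scheme $(Y_\de(t))$ with $\de$-independent constants, extracting from it the density bounds $\|h^\de_u\|_{L^{m'}(\mu_0)}\lesssim(1-\e^{-K_0u})^{-\gamma}$, and checking that the chain of H\"older exponents fits inside the range dictated by $\eta>4\lambda Td$ --- the remaining ingredients (Girsanov, dominated convergence, density of bounded continuous functions in $L^{q_0}(\mu_0)$) being routine.
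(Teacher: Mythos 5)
Your decomposition is genuinely different from the paper's, and the difference is exactly where the gap lies. You introduce a second auxiliary process, the Euler scheme $(Y_\de(t))$ of \eqref{2.6}, and Girsanov-transform $Y_\de$ and $Y$ separately, so that $\E[f(X_\de(t))]=\E[f(Y_\de(t))N^\de_t]$ and $\E[f(X(t))]=\E[f(Y(t))N_t]$. This forces you to control exponential moments of $\int_0^T|Z(s_\de,Y_\de(s_\de))|^2\,\d s$, i.e.\ of the singular drift evaluated along the Euler chain $Y_\de$, and (for merely measurable $f$) to control the density of $\mathscr L(Y_\de(u))$ w.r.t.\ $\mu_0$ uniformly in $\de$. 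Both reductions hinge on a ``dimension-free Harnack inequality for the Euler scheme with $\de$-independent constants'' and the resulting bound $\|h^\de_u\|_{L^{m'}(\mu_0)}\le C(1-\e^{-K_0u})^{-\gamma}$, which you assert but do not prove, and which you yourself flag as the main obstacle. This is a genuine gap, not a routine verification: the Harnack inequality \eqref{3.2} from \cite[Theorem 1.1]{Wa11} is for the diffusion semigroup $P_t^0$ generated by $L_0$ and exploits its symmetry w.r.t.\ $\mu_0$; none of this transfers to the time-inhomogeneous chain interpolation $Y_\de$, whose one-step kernels are Gaussians recentred by $Z_0(Y_\de(k\de))$ and which is not reversible w.r.t.\ $\mu_0$. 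Without such a bound neither the uniform integrability of $N^\de_t$ nor your $L^{p_0}$/$L^{q_0}$ approximation step for bounded measurable $f$ is justified.

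The difficulty is self-inflicted and disappears with the paper's choice of representation: both $X(t)$ and $X_\de(t)$ are realized as the \emph{same} continuous diffusion $(Y(t))$ under two measures $\q_1,\q_2$, where the density of $\q_2$ (see \eqref{rep-EM}--\eqref{q2}) involves $b(s_\de,Y(s_\de))$ evaluated along the continuous process $Y$ itself, not along its Euler scheme. Then every exponential moment is taken w.r.t.\ the law of $Y$, for which Lemma \ref{t-3.1}(ii) (note it bounds $\E\,\e^{\beta\int_0^TG(s,Y(s_\de))\d s}$ with $Y$ at delayed times, precisely for this purpose) and Lemma \ref{t-3.2} apply, and the elementary bound $|\E f(X(t))-\E f(X_\de(t))|\le\|f\|_\infty\,\E\big|\tfrac{\d\q_1}{\d\p}-\tfrac{\d\q_2}{\d\p}\big|$ yields \eqref{2.7} for every bounded measurable $f$ in one stroke, with no density estimate and no continuous-approximation step. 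If you replace $Y_\de$ by $Y$ inside your Girsanov density for the Euler scheme, the remaining ingredients of your argument (a.s.\ convergence of the exponents from the continuity of $Z$, uniform integrability via the exponential moments, the inequality $|\e^x-\e^y|\le(\e^x+\e^y)|x-y|$) go through essentially as in the paper.
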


In order to estimate the weak convergence rate of above EM's scheme, certain regularity conditions on the drift $b(t,x)$ are needed. A measurable function $h:\R^d\ra \R$ is said to be \emph{polynomially bounded} if there exist positive constants $K,\,p$ such that $|h(x)|\leq K(1+|x|^p)$ for all $x\in\R^d$.
\begin{mythm}\label{t-2}
Suppose the conditions in Theorem \ref{t-1} hold. In addition, assume that  there exist constants $K_1,\,m_1>0, \alpha\in(0,1]$ and a polynomially bounded function $h:\R^d\ra \R_+$ such that
\begin{equation}\label{2.8}
\begin{split}
|Z(t,x)-Z(t,y)|&\leq K_1(1+|x|^{m_1}+|y|^{m_1})|x-y|, \quad  t\in[0,T],\ x,y\in\R^d,\\
|Z(t,x)-Z(s,x)|&\leq h(x) |t-s|^{\alpha},\qquad  t,s\in[0,T],\ x\in\R^d.
\end{split}
\end{equation}
Then, for any bounded measurable function $f$ on $\R^d$,
\begin{equation}\label{2.9}
|\E[f(X_\de(t))]-\E[f(X(t))]|\leq C \de^{\frac 12\wedge \alpha},\quad t\in[0,T],
\end{equation} and
\begin{equation}\label{2.9.5} W_{bL}(\mathscr L(X_\de(t)),\mathscr L(X(t)))\leq C \de^{\frac 12\wedge \alpha},\quad t\in[0,T],
\end{equation}
where $C$ is a constant depending on $\lambda,\, d, m_1,\alpha, K_0, K_1, \eta$ and $\sup_{t\leq T} \E[ h(Y(t))]$. Because of the linear growth of $Z_0$ and the polynomial boundedness of $h$,  $\sup_{t\leq T} \E[ h(Y(t))]$ is finite.
\end{mythm}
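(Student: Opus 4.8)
The plan is to transfer both $(X(t))$ and $(X_\delta(t))$ onto the law of the auxiliary reference process $(Y(t))$ of \eqref{2.6} by Girsanov's theorem, and then to estimate the resulting change-of-measure densities quantitatively using \eqref{2.8}. Writing $b(t,x)=Z_0(x)+Z(t,x)$ and $s_\delta:=[s/\delta]\delta$, set
\begin{equation*}
N_t:=\exp\Big(\int_0^t\langle\sigma^{-1}Z(s,Y(s)),\d W(s)\rangle-\tfrac12\int_0^t|\sigma^{-1}Z(s,Y(s))|^2\d s\Big),
\end{equation*}
\begin{equation*}
N_t^\delta:=\exp\Big(\int_0^t\langle\sigma^{-1}\Psi_\delta(s),\d W(s)\rangle-\tfrac12\int_0^t|\sigma^{-1}\Psi_\delta(s)|^2\d s\Big),\quad \Psi_\delta(s):=b(s_\delta,Y(s_\delta))-Z_0(Y(s)).
\end{equation*}
As in the proofs of Theorems \ref{t-0} and \ref{t-1}, Girsanov's theorem gives, for every bounded measurable $f$ and $t\in[0,T]$, $\E[f(X(t))]=\E[f(Y(t))N_t]$ and $\E[f(X_\delta(t))]=\E[f(Y(t))N_t^\delta]$, while $(\mathrm{H}_\sigma)$, (H2) with $\eta>4\lambda Td$, and the dimension-free Harnack inequality for $(Y(t))$ give $\E N_t=\E N_t^\delta=1$ and the uniform bound $\sup_{\delta>0,\,t\le T}\E[(N_t+N_t^\delta)^{p'}]<\infty$ for some $p'>1$. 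Hence $|\E[f(X(t))]-\E[f(X_\delta(t))]|\le\|f\|_\infty\E|N_t-N_t^\delta|$ and, since $W_{bL}\le\|\cdot\|_{\mathrm{TV}}$, also $W_{bL}(\mathscr L(X(t)),\mathscr L(X_\delta(t)))\le\E|N_t-N_t^\delta|$; thus both \eqref{2.9} and \eqref{2.9.5} reduce to proving $\sup_{t\le T}\E|N_t-N_t^\delta|\le C\delta^{\frac12\wedge\alpha}$.

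For the latter, I would use $|\e^a-\e^b|\le(\e^a+\e^b)|a-b|$ together with Hölder's inequality with conjugate exponents $p,p'>1$, reducing matters (by the uniform bound above) to estimating $\|\log N_t-\log N_t^\delta\|_{L^p}$. Expanding,
\begin{equation*}
\log N_t-\log N_t^\delta=\int_0^t\langle\sigma^{-1}\Delta_\delta(s),\d W(s)\rangle-\tfrac12\int_0^t\big(|\sigma^{-1}Z(s,Y(s))|^2-|\sigma^{-1}\Psi_\delta(s)|^2\big)\d s,
\end{equation*}
where, after inserting $\pm Z(s_\delta,Y(s))$,
\begin{equation*}
\Delta_\delta(s):=Z(s,Y(s))-\Psi_\delta(s)=\big(Z(s,Y(s))-Z(s_\delta,Y(s))\big)+\big(Z(s_\delta,Y(s))-Z(s_\delta,Y(s_\delta))\big)+\big(Z_0(Y(s))-Z_0(Y(s_\delta))\big).
\end{equation*}
The two inequalities in \eqref{2.8} and the Lipschitz continuity of $Z_0$ then give the pointwise bound $|\Delta_\delta(s)|\le h(Y(s))\delta^{\alpha}+\big(K_1(1+|Y(s)|^{m_1}+|Y(s_\delta)|^{m_1})+K_0\big)|Y(s)-Y(s_\delta)|$.

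Since $Z_0$ is globally Lipschitz, $(Y(t))$ has finite moments of all orders on $[0,T]$ and obeys the one-step estimate $\E\big[\sup_{s_\delta\le r\le s}|Y(r)-Y(s_\delta)|^{k}\big]\le C_k\delta^{k/2}$; with $h$ polynomially bounded, Hölder's inequality yields $\sup_{s\le T}\|\Delta_\delta(s)\|_{L^{k}}\le C\delta^{\frac12\wedge\alpha}$ for every $k\ge1$. The stochastic-integral term is controlled in $L^p$ by the Burkholder--Davis--Gundy and Jensen inequalities, contributing $C\delta^{\frac12\wedge\alpha}$; for the drift term one bounds $\big|\,|\sigma^{-1}Z(s,Y(s))|^2-|\sigma^{-1}\Psi_\delta(s)|^2\,\big|\le\lambda|\Delta_\delta(s)|\big(|Z(s,Y(s))|+|\Psi_\delta(s)|\big)$ and applies Hölder in $s$: the factor $\|\Delta_\delta(s)\|_{L^{2p}}\le C\delta^{\frac12\wedge\alpha}$ carries the rate, while $\int_0^T\big(\|Z(s,Y(s))\|_{L^{2p}}+\|\Psi_\delta(s)\|_{L^{2p}}\big)\d s<\infty$ uniformly in $\delta$ by the Harnack-type bound $\E[g(Y(s))]\le C(s)\mu_0(g^{p_0})^{1/p_0}$ — whose singular prefactor is integrable near $0$ for the admissible exponent — combined with (H2). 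Collecting these gives $\|\log N_t-\log N_t^\delta\|_{L^p}\le C\delta^{\frac12\wedge\alpha}$ uniformly in $t\le T$, and hence \eqref{2.9} and \eqref{2.9.5}.

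I expect the main difficulty to lie not in any single estimate but in the uniform-in-$\delta$ bookkeeping. In $N_t^\delta$ the drift $Z$ is frozen at the grid time $s_\delta$, which may equal $0$, so the Harnack bound must be summed along the Euler grid ($\sum_k\delta\,C(k\delta)^{\gamma}\le C\int_0^T C(s)^{\gamma}\d s$), and the admissible exponent $p_0=\sqrt{\eta/(2\lambda Td)}\wedge2$ must be simultaneously large enough for this time-singularity at $s=0$ to be integrable and small enough — its conjugate large enough — for (H2) still to supply the exponential moments after the $p'$-th power is taken. Reconciling these competing demands is precisely what forces the strengthening of the condition $\eta>2\lambda Td$ of (H1) to $\eta>4\lambda Td$ in (H2).
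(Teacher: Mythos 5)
Your proposal is correct and follows essentially the same route as the paper: represent both $\mathscr L(X(t))$ and $\mathscr L(X_\de(t))$ through the auxiliary process $(Y(t))$ via Girsanov densities, bound the difference of the densities with $|\e^a-\e^b|\le(\e^a+\e^b)|a-b|$ and H\"older (the uniform $p_0$-moment of the densities coming from Lemmas \ref{t-3.1}--\ref{t-3.4}), and then extract the rate $\de^{\frac12\wedge\alpha}$ from the decomposition of $Z(s,Y(s))-Z(s_\de,Y(s_\de))$ and $Z_0(Y(s))-Z_0(Y(s_\de))$ using \eqref{2.8}, BDG and the one-step increment bound for $Y$. The only cosmetic difference is that you obtain $\E|Y(s)-Y(s_\de)|^{k}\le C\de^{k/2}$ directly from the Lipschitz drift, whereas the paper routes this through Aronson-type heat-kernel bounds \eqref{j-4}--\eqref{j-5}; both yield the same estimate.
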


\begin{myexam}
Let $b$ and $b_\veps$ be given by \eqref{1.2} and \eqref{1.3}. In addition, without loss of the generality, assume $\mathrm{supp}(\psi)\subset [-1,1]$. Then, take
$\dis \mu_0(\d x)=\frac{\e^{-x^2/2}}{\sqrt{2\pi}}\d x$.
Correspondingly,
\[Z_0(x)=-x,\quad Z(x)=\Big\{\sum_{n=1}^\infty \log\Big(1+\frac{1}{|x-n|^2}\Big)\Big\}^{\frac 12}.\]
For any $\eta>0$, it holds
\begin{equation}\label{ex-1}
\mu_0\Big(\e^{\eta |Z|^2}\Big)<\infty.
\end{equation}
Precisely, it is sufficient to show the finiteness of this integral over the positive half line. To this end,
for $k\in\N$, it holds
\begin{align*}
  &\int_{k+\frac 12}^{k+\frac 32} \e^{\eta \sum_{n=1}^\infty \log\big(1+\frac{1}{|x-n|^2}\big)}\mu_0(\d x)\\
  &\leq \int_{k+\frac 12}^{k+\frac 32} \e^{\eta \log\big(1+\frac1{|x-k-1|^2}\big)}\e^{\eta \sum_{\ell=0}^{k-1}\log\big(1+\frac{1}{(\frac1 2+\ell)^2}\big)} \e^{\eta \sum_{\ell=0}^\infty \log\big(1+\frac{1}{(\frac 12+\ell)^2}\big)}\mu_0(\d x)\\
  &\leq  \e^{2\eta \sum_{\ell=0}^\infty \log\big(1+\frac{1}{(\frac 12+\ell)^2}\big)}\int_{k+\frac 12}^{k+\frac 32} \big(1+\frac{1}{|x-k-1|^2}\big)^\eta\mu_0(\d x)\\
  &\leq \e^{2\eta \sum_{\ell=0}^\infty \log\big(1+\frac{1}{(\frac 12+\ell)^2}\big)}\frac{\e^{- (k+\frac 12)^2/2}}{\sqrt{2\pi}}\Big(\int_{-\frac 12}^{\frac 12} \big(1+\frac{1}{|x|^2}\big)^\eta\d x\Big).
\end{align*}
Then
\begin{align*}
  &\int_{\frac 12}^\infty\!\!\e^{\eta\sum_{n=1}^\infty \log\big(1+\frac{1}{|x-n|^2}\big)}\mu_0(\d x)= \sum_{k=0}^\infty\int_{k+\frac 12}^{k+\frac{3}{2}} \e^{\eta\sum_{n=1}^\infty\log \big(1+\frac{1}{|x-n|^2}\big)}\mu_0(\d x)\\
  &\leq  \e^{2\eta \sum_{\ell=0}^\infty \log\big(1+\frac{1}{(\frac 12+\ell)^2}\big)}\Big(\int_{-\frac 12}^{\frac 12} \big(1+\frac{1}{|x|^2}\big)^\eta\d x\Big)\sum_{k=0}^\infty \frac{\e^{- (k+\frac 12)^2/2}}{\sqrt{2\pi}}  <\infty.
\end{align*}Hence \eqref{ex-1} holds. Similarly, one can show that
\[Z_\veps(x)=\big(Z\ast \psi_\veps\big)(x)\quad \text{and}\quad \frac{\d Z_\veps(x)}{\d x} =\int_{\R}Z(y)\frac{\d }{\d x}\psi_\veps(x-y)\d y, \quad x\in \R^d, \]
are all bounded.

Let $(X_\veps(t))$ and $(X_\veps^\de(t))$ be determined by \eqref{1.4} and \eqref{1.5} respectively. Then, according to Theorem \ref{t-0} and Theorem \ref{t-2}, for every given $T>0$, $\xi>1$, there exists a constant $C$ such that
\begin{align*}
   W_{bL}(\mathscr L(X_\veps(t)), \mathscr L(X(t)))\leq C\big(\mu_0(|Z-Z_\veps|^{2\xi})\big)^{\frac 1{2\xi}}, \quad t\in[0,T],
\end{align*}
and
\begin{align*}
  W_{bL}(\mathscr L(X_\veps^\de(t)),\mathscr L(X_\veps(t)))\leq C \de^{\frac 12},\quad t\in[0,T].
\end{align*}
\end{myexam}

\subsection{Degenerate Case}

It is known that uniformly elliptic condition plays an important role in the study of EM's scheme. As shown by Hairer et al. \cite[Theorem 5.1]{HHJ}, an example SDE with globally bounded and smooth coeffiecients was constructed to show that the standard EM's approximation converges to the exact solution of this SDE in the strong and weak sense, but at a rate that is slower than any power law.
Whereas, our method used in the previous subsection can be extended without much additional difficulties to the case of  degenerate  SDEs in the form:
\begin{equation}\label{deg-1}
\begin{aligned}
  \d X^{(1)}(t)&= X^{(2)}(t)\d t,\\
  \d X^{(2)}(t)&=b(t,X^{(1)}(t),X^{(2)}(t))\d t+\sigma \d W(t).
\end{aligned}
\end{equation}
This system is known as a stochastic damping Hamiltonian system.
We shall use $(X^{(1)}(t,x_1)$, $ X^{(2)}(t,x_2))$ to denote the solution to \eqref{deg-1} with initial value $(x_1,x_2)\in \R^{2d}$.
For any $f\in \mathscr B_b(\R^{2d})$, the set of all bounded measurable functions on $\R^{2d}$, let
\[P_tf(x_1,x_2)=\E f\big(X^{(1)}(t,x_1), X^{(2)}(t,x_2)\big). \]
Then $u(t,x_1,x_2):=P_t f(x_1,x_2)$ solves a degenerated Fokker-Planck type equation, which has recently attracted much attention in the name ``kinetic Fokker-Planck equation"; see Villani \cite{Vi09}. The long time behavior of $P_t$ has been investigated in \cite{BCG,Wu01}. Guillin and Wang in \cite{GW12} established the Bismut formula and Harnack inequality of this system. The sensitivity analysis of the densities of such system in time-homogeneous case w.r.t. a perturbation of the coefficients of non-degenerate component has been performed by Kozhina \cite{Koz} following \cite{KKM}. Their proofs are based on the parametrix expansions of the underlying densities, which demands, in particular, the boundedness of the coefficients.

The results in the previous subsection all have their corresponding extension to the degenerate system \eqref{deg-1}. In what follows,    the corresponding extension of Theorems \ref{t-0}, \ref{t-1} and \ref{t-2} are presented.

Let $(\wt X^{(1)}(t), \wt X^{(2)}(t))$ be the solution to the following SDE:
\begin{equation}\label{deg-2}
\begin{split}
  \d \wt X^{(1)}(t)&=\wt X^{(2)}(t)\d t,\\
  \d \wt X^{(2)}(t)&=\tilde b(t, \wt X^{(1)}(t),\wt X^{(2)}(t))\d t +\sigma \d W(t),
\end{split}
\end{equation} where $\tilde b:\R_+\times \R^{2d}\ra \R^d$. Throughout this subsection, we assume that SDEs \eqref{deg-1} and \eqref{deg-2} admit unique weak solutions.

\begin{mythm}\label{t-deg-1}
Let $T>0$ be given. Suppose that $\mathrm{(H_\sigma)}$ holds and there exists  $V\in \mathscr V$ such that $Z_0$, $Z(t,x_1,x_2):=b(t,x_1,x_2)-Z_0(x_2)$, $\wt Z(t,x_1,x_2):=\tilde b(t,x_1,x_2)-Z_0(x_2)$ satisfy:
\begin{itemize}
  \item[$\mathrm{(A1)}$] there exists $\eta>2\lambda T d$ such that
  \begin{equation}\label{deg-3}
  \sup_{t\in[0,T],x_1\in\R^d}\mu_0\Big(\e^{\eta |Z(t,x_1,\cdot)|^2}\Big)<\infty,\quad \sup_{t\in[0,T],x_1\in\R^d}\mu_0\Big(\e^{\eta|\wt Z(t,x_1,\cdot)|^2}\Big)<\infty.
  \end{equation}
\end{itemize}Then, it holds for every $\xi>d$ that
\begin{equation}\label{deg-4}
\begin{split}
&\sup_{t\in[0,T]}W_{bL}\big(\mathscr L(X^{(1)}(t),X^{(2)}(t)), \mathscr L(\wt X^{(1)}(t),\wt X^{(2)}(t))\big)\\
&\leq C\Big\{\int_0^T \frac{\mu_0\big(\sup_{x_1\in\R^d}|Z(s,x_1,\cdot)-\wt Z(s,x_1,\cdot)|^{q_0\xi}\big)^{\frac 1\xi}}{(1-\e^{-K_0s})^{\frac{d}{\xi}}}\,\d s\Big\}^{\frac{1}{q_0}},
\end{split}
\end{equation}
where $q_0=p_0/(p_0-1)$, $p_0=\sqrt{\frac{\eta}{2\lambda Td}}\wedge 2$.
\end{mythm}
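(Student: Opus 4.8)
The plan is to run, for the degenerate system \eqref{deg-1}, the same Girsanov-plus-Harnack argument underlying Theorem \ref{t-0}, the key point being that the velocity component of the reference system is an autonomous \emph{non-degenerate} $\R^d$-valued diffusion with symmetric measure $\mu_0$. By the dual formula \eqref{bL-dis}, it suffices to bound $\E[\phi(X^{(1)}(t),X^{(2)}(t))]-\E[\phi(\wt X^{(1)}(t),\wt X^{(2)}(t))]$ uniformly in $t\in[0,T]$ and in bounded $\phi$ with $\|\phi\|_{\mathrm{Lip}}+\|\phi\|_\infty\le1$. Introduce the reference system
\begin{equation*}
\d Y^{(1)}(t)=Y^{(2)}(t)\,\d t,\qquad \d Y^{(2)}(t)=Z_0(Y^{(2)}(t))\,\d t+\sigma\,\d W(t),\qquad (Y^{(1)}(0),Y^{(2)}(0))=(x_1,x_2)=x_0.
\end{equation*}
Since $V\in\mathscr V$, the drift $Z_0$ is Lipschitz, so this system is non-explosive, and its velocity component $(Y^{(2)}(t))$ is the $\mu_0$-symmetric diffusion generated by $L_0$, which by the Lipschitz bound in \eqref{2.4} satisfies the dimension-free Harnack inequality. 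As in the proof of Theorem \ref{t-0}, this yields, for the transition density $p_s$ of $(Y^{(2)}(t))$, a bound $\|p_s(x_2,\cdot)\e^{V}\|_{L^{\kappa}(\mu_0)}\le C_\kappa(1-\e^{-K_0s})^{-\gamma_\kappa}$, where $\gamma_\kappa\to0$ as $\kappa\to1$ and $\gamma_{\xi/(\xi-1)}=d/\xi<1$ for $\xi>d$.

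Next I apply Girsanov's theorem. Put $\theta(s)=\sigma^{-1}Z(s,Y^{(1)}(s),Y^{(2)}(s))$, $\wt\theta(s)=\sigma^{-1}\wt Z(s,Y^{(1)}(s),Y^{(2)}(s))$, and
\begin{equation*}
N_t=\exp\Big(\int_0^t\langle\theta(s),\d W(s)\rangle-\tfrac12\int_0^t|\theta(s)|^2\,\d s\Big),\qquad \wt N_t=\exp\Big(\int_0^t\langle\wt\theta(s),\d W(s)\rangle-\tfrac12\int_0^t|\wt\theta(s)|^2\,\d s\Big).
\end{equation*}
After a localisation, condition (A1) shows $(N_t)$ and $(\wt N_t)$ are genuine martingales; a drift change in the noisy component, together with the weak uniqueness assumed for \eqref{deg-1} and \eqref{deg-2}, then gives $\E[\phi(X^{(1)}(t),X^{(2)}(t))]=\E[\phi(Y^{(1)}(t),Y^{(2)}(t))N_t]$ and, analogously, the same identity for $\wt X$ with $\wt N_t$. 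Hence
\begin{equation*}
\E[\phi(X^{(1)}(t),X^{(2)}(t))]-\E[\phi(\wt X^{(1)}(t),\wt X^{(2)}(t))]=\E\big[\phi(Y^{(1)}(t),Y^{(2)}(t))(N_t-\wt N_t)\big],
\end{equation*}
and since $\|\phi\|_\infty\le1$ the left-hand side is at most $\E|N_t-\wt N_t|$.

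It remains to estimate $\E|N_t-\wt N_t|$. From $|\e^a-\e^b|\le(\e^a\vee\e^b)|a-b|$ with $a=\log N_t$, $b=\log\wt N_t$, and Hölder's inequality with the conjugate pair $p_0=\sqrt{\eta/(2\lambda Td)}\wedge2$, $q_0=p_0/(p_0-1)$,
\begin{equation*}
\E|N_t-\wt N_t|\le\big(\E[N_t^{p_0}]+\E[\wt N_t^{p_0}]\big)^{1/p_0}\big(\E|\log N_t-\log\wt N_t|^{q_0}\big)^{1/q_0}.
\end{equation*}
For the first factor, write $N_t^{p_0}$ as a martingale times $\exp(\tfrac12p_0(p_0-1)\int_0^t|\theta|^2\,\d s)$, bound $|\theta(s)|^2\le\lambda\sup_{x_1}|Z(s,x_1,Y^{(2)}(s))|^2$, apply Jensen in time, and then Hölder against $p_s(x_2,\cdot)\e^{V}$ using an $L^\kappa(\mu_0)$-density bound with $\kappa$ close to $1$ (where $\gamma_\kappa$ is small); the relation $p_0^2\cdot2\lambda Td\le\eta$ keeps the resulting exponent admissible for condition (A1), so this factor is a constant depending only on $\lambda,T,d,\eta$ and $x_0$. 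For the second factor, $\log N_t-\log\wt N_t=\int_0^t\langle\theta-\wt\theta,\d W\rangle-\tfrac12\int_0^t\langle\theta+\wt\theta,\theta-\wt\theta\rangle\,\d s$, so by Burkholder--Davis--Gundy, Minkowski, $\mathrm{(H_\sigma)}$, and Hölder (the factors $|\theta|,|\wt\theta|$ again handled via the exponential moments in (A1)), one reduces to controlling $\int_0^t\E\big[\sup_{x_1}|Z(s,x_1,Y^{(2)}(s))-\wt Z(s,x_1,Y^{(2)}(s))|^{q_0\xi}\big]^{1/\xi}\,\d s$ for $\xi>d$; one last Hölder step converting $\mathscr L(Y^{(2)}(s))$ into $\mu_0$ via the density bound with $\kappa=\xi/(\xi-1)$ produces the weight $(1-\e^{-K_0s})^{-d/\xi}$, and hence \eqref{deg-4} after taking the supremum over $t\in[0,T]$.

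The main obstacle is reducing the two factors above to integrals against $\mu_0$. Since the noise drives only the velocity component, the whole estimate must be charged to the $\R^d$-valued marginal $(Y^{(2)}(t))$; this is legitimate precisely because $|Z-\wt Z|$ and the exponential moments in (A1) are controlled uniformly in the position variable $x_1$, and because $(Y^{(2)}(t))$ is autonomous with symmetric measure $\mu_0$ — so that, for this statement, the kinetic Harnack inequality of \cite{GW12} is not actually needed. The delicate part is then to balance the exponents $p_0,q_0,\kappa,\xi$ so that the time-singular price $(1-\e^{-K_0s})^{-\gamma_\kappa}$ from the Harnack inequality stays integrable on $[0,T]$ while the exponential moments from (A1) remain usable; this is exactly what forces the thresholds $\eta>2\lambda Td$ and $\xi>d$. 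Minor points still needing care are the genuine-martingale property of $(N_t)$ and $(\wt N_t)$ under the mere integrability condition (A1) rather than a Novikov condition, and the uniformity in $x_1$ of all constants; unlike in \cite{Koz,KKM}, no boundedness of the coefficients is required.
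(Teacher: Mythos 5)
Your proposal is correct and follows essentially the same route as the paper: the paper's proof of Theorem \ref{t-deg-1} introduces exactly your reference system \eqref{edeg-1}, performs the Girsanov change of drift only in the velocity component, verifies Novikov's condition via Lemma \ref{t-3.1} applied to $\sup_{x_1}|Z(\cdot,x_1,\cdot)|$ (so that the Harnack inequality is only ever used for the autonomous non-degenerate marginal $(Y^{(2)}(t))$, not the kinetic semigroup), and then repeats the $p_0,q_0,\xi$ bookkeeping of Theorem \ref{t-0} verbatim. The only caveat — which you share with the paper, since its displayed condition (A1) puts $\sup_{x_1}$ outside $\mu_0$ while the argument (and the right-hand side of \eqref{deg-4}) requires it inside, as in (A2) — does not distinguish your argument from the intended one.
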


Next, let us consider EM's approximation of SDE \eqref{deg-1}, which is determined by
\begin{equation}\label{deg-5}
\begin{split}
  \d X^{(1)}_\de(t)&=X^{(2)}_\de(t)\d t,\\
  \d X^{(2)}_\de(t)&=b(t_\de, X^{(1)}_\de(t_\de),X^{(2)}_\de(t_\de))\d t+\sigma \d W(t),
\end{split}
\end{equation} where $t_\de=[t/\de]\de$ for $\de>0$.

\begin{mythm}\label{t-deg-2}
Let $T>0$ be given. Assume that $\mathrm{(H_\sigma)}$ holds and there exists $V\in\mathscr V$ such that $(t,x_1,x_2)\mapsto Z(t,x_1,x_2):=b(t,x_1,x_2)-Z_0(x_2)$ is continuous. Suppose that
\begin{itemize}
  \item[$\mathrm{(A2)}$] there exist constants $\eta_0>0$, $\eta>4\lambda T d$ such that
  \[\mu_0\big(\e^{\eta_0|Z_0|^2}\big)<\infty \quad \text{and}\quad \sup_{t\in[0,T]}\mu_0\big(\sup_{x_1\in \R^d}\e^{\eta |Z(t,x_1,\cdot)|^2}\big)<\infty.\]
\end{itemize}
Then, for any bounded measurable function $f$ on $\R^{2d}$,
\begin{equation}\label{deg-6}
\lim_{\de\ra 0}\big|\E f(X_\de^{(1)}(t),X_\de^{(2)}(t))-\E f(X^{(1)}(t),X^{(2)}(t))\big|=0,
\end{equation}
and
\begin{equation}\label{deg-7}
\lim_{\de\ra 0}W_{bL}\big(\mathscr L(X_\de^{(1)}(t),X_\de^{(2)}(t)),\mathscr L(X^{(1)}(t),X^{(2)}(t))\big)=0.
\end{equation}
\end{mythm}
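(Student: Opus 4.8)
\noindent\emph{Proof strategy.}
The plan is to transcribe the argument behind Theorem~\ref{t-1}, using the stochastic damping Hamiltonian reference process in place of the non-degenerate diffusion and the Harnack inequality of Guillin and Wang \cite{GW12} in place of the classical dimension-free Harnack inequality. Fix $V\in\mathscr V$, and let $(Y^{(1)},Y^{(2)})$ solve $\d Y^{(1)}(t)=Y^{(2)}(t)\,\d t$, $\d Y^{(2)}(t)=Z_0(Y^{(2)}(t))\,\d t+\sigma\,\d W(t)$ with $Y(0)=x_0$; since $Z_0$ is globally Lipschitz and of linear growth, this system is non-explosive with finite moments of every order on $[0,T]$. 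Let $(Y^{(1)}_\de,Y^{(2)}_\de)$ be its Euler-Maruyama scheme with step $\de$, i.e.\ $\d Y^{(1)}_\de(t)=Y^{(2)}_\de(t)\,\d t$ and $\d Y^{(2)}_\de(t)=Z_0(Y^{(2)}_\de(t_\de))\,\d t+\sigma\,\d W(t)$. Set $\gamma_\de(s)=\sigma^{-1}Z\big(s_\de,Y^{(1)}_\de(s_\de),Y^{(2)}_\de(s_\de)\big)$ and $R^\de_t=\exp\big(\int_0^t\la\gamma_\de(s),\d W(s)\raa-\tfrac12\int_0^t|\gamma_\de(s)|^2\,\d s\big)$, and let $\bar\gamma(s)=\sigma^{-1}Z(s,Y^{(1)}(s),Y^{(2)}(s))$ and $\bar R_t$ be the analogues built from the exact reference process $Y$. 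By the local Harnack argument of \cite{Wa17}, condition $\mathrm{(A2)}$ ensures $\E R^\de_t=\E\bar R_t=1$; hence $X_\de(t)$ is non-explosive on $[0,T]$ and, by Girsanov's theorem, $\E[f(X_\de(t))]=\E[R^\de_t f(Y_\de(t))]$ and $\E[f(X(t))]=\E[\bar R_t f(Y(t))]$ for bounded measurable $f$. Since moreover $\eta>4\lambda Td$, the same computation gives $\sup_{0<\de\le1}\E[(R^\de_t)^{p_0}]<\infty$ with $p_0=\sqrt{\eta/(2\lambda Td)}\wedge2>1$ and conjugate exponent $q_0=p_0/(p_0-1)$.

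Two auxiliary facts enter. First, the classical Euler-Maruyama theory for the globally Lipschitz drift $Z_0$ gives $\lim_{\de\to0}\E\big[\sup_{s\le T}|Y_\de(s)-Y(s)|^{q}\big]=0$ for every $q\ge1$. Second, $R^\de_t\to\bar R_t$ in $L^1(\p)$: the continuity of $Z$ and the strong convergence just stated imply $\gamma_\de(s)\to\bar\gamma(s)$ in probability for a.e.\ $s\le t$, so the stochastic integrals and the quadratic-variation terms in the exponents converge in probability, while $\mathrm{(A2)}$ provides the exponential integrability that makes $\{R^\de_t\}$ uniformly integrable; hence the convergence holds in $L^1$. (One could instead apply Theorem~\ref{t-deg-1} with $\tilde b(s,\cdot)=b(s_\de,\cdot)$ to first pass from $X(t)$ to the SDE with only the time variable discretized, but the additional spatial freezing present in \eqref{deg-5} would still call for a Girsanov estimate, so the route above is the more economical one.)

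Given these, \eqref{deg-7} is immediate: for $\phi$ with $\|\phi\|_{\mathrm{Lip}}+\|\phi\|_\infty\le1$,
\[\E[\phi(X_\de(t))]-\E[\phi(X(t))]=\E\big[R^\de_t\big(\phi(Y_\de(t))-\phi(Y(t))\big)\big]+\E\big[(R^\de_t-\bar R_t)\phi(Y(t))\big],\]
and the first term is $\le\big(\E[(R^\de_t)^{p_0}]\big)^{1/p_0}\big(\E|Y_\de(t)-Y(t)|^{q_0}\big)^{1/q_0}$ by $\|\phi\|_{\mathrm{Lip}}\le1$ and H\"older's inequality, while the second is $\le\E|R^\de_t-\bar R_t|$; both tend to $0$ uniformly in such $\phi$. (Running the same decomposition with a bounded continuous $\phi$ rather than a Lipschitz one already yields the weak convergence $\mathscr L(X_\de(t))\Rightarrow\mathscr L(X(t))$.)

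The genuinely new point is \eqref{deg-6}, the upgrade to bounded \emph{measurable} $f$. As in the non-degenerate case I would obtain it from a uniform-in-$\de$ absolute-continuity statement: the family $\{\mathscr L(X_\de(t))\}_{0<\de\le1}$ is tight, and its members possess densities with respect to $\d x_1\otimes\mu_0(\d x_2)$ that are bounded, uniformly in $\de$, in $L^{\kappa}$ on every ball, for some $\kappa>1$. In view of $\E[\phi(X_\de(t))]=\E[R^\de_t\phi(Y_\de(t))]\le\big(\E[(R^\de_t)^{p_0}]\big)^{1/p_0}\big(\E[\phi(Y_\de(t))^{q_0}]\big)^{1/q_0}$ and the uniform moment bound on $R^\de_t$, this reduces to a Harnack/hyperboundedness estimate, uniform in $\de$, for the Euler scheme of the kinetic reference process: $\E[\phi(Y_\de(t))^{q_0}]$ should be dominated by a power of a local weighted integral of $\phi$. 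Such an estimate can be extracted from the Harnack inequality of \cite{GW12} by conditioning at an intermediate time $t/2$ and exploiting that the noise accumulated over $[t/2,t]$ regularizes both the velocity and the position components with a covariance bounded below uniformly in $\de$. Granted it, the weak convergence of the preceding paragraph yields $\E[f(X_\de(t))]\to\E[f(X(t))]$ for every bounded measurable $f$: split $f=f\mathbf 1_{B_R}+f\mathbf 1_{B_R^c}$; on $B_R$ a local $L^\kappa$ density bound together with the weak convergence forces weak-$L^\kappa$ convergence of the restricted densities (the limit being identified by testing against $C_c(B_R)$), hence convergence against the bounded function $f\mathbf 1_{B_R}$, while the tail is handled by tightness and $\|f\|_\infty$. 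I expect this last ingredient — a dimension-free Harnack inequality for the \emph{degenerate} Euler scheme, uniform in the step size — to be the main obstacle; the remainder is a routine adaptation of the proofs of Theorems~\ref{t-1} and \ref{t-deg-1}.
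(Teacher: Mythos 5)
Your overall framework (Girsanov plus Harnack plus uniform integrability) is the right family of ideas, but your choice of reference representation departs from the paper's in a way that creates the very obstacle you flag at the end, and that obstacle is not present in the paper's argument. The paper represents \emph{both} laws on the \emph{same} auxiliary path: it takes the exact kinetic reference process $(X(t),Y(t))$ of \eqref{edeg-1} and rewrites it twice, once as a solution of \eqref{deg-1} under a measure $\q_3$ and once as a solution of the EM scheme \eqref{deg-5} under a measure $\q_5$, by absorbing the time--space freezing of the drift into the Girsanov density $\frac{\d\q_5}{\d\p}$, which is built from $Z_0(Y(s))-b(s_\de,X(s_\de),Y(s_\de))$ evaluated along the \emph{exact} process (see \eqref{edeg-9}--\eqref{edeg-12}). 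Then for any bounded measurable $f$,
\begin{equation*}
\big|\E f(X_\de^{(1)}(t),X_\de^{(2)}(t))-\E f(X^{(1)}(t),X^{(2)}(t))\big|\leq \|f\|_\infty\,\E\Big|\tfrac{\d\q_3}{\d\p}-\tfrac{\d\q_5}{\d\p}\Big|,
\end{equation*}
because $f$ is evaluated at the same random point $(X(t),Y(t))$ under both measures; the right-hand side tends to $0$ exactly as in the proof of Theorem \ref{t-1}, by continuity of $Z$ and of the paths together with the uniform integrability supplied by Lemmas \ref{t-3.1} and \ref{t-3.2} under $\mathrm{(A2)}$. No density estimate, no regularity of $f$, and no Harnack inequality for any Euler scheme is ever needed.

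By contrast, you discretize the reference process itself and write $\E[f(X_\de(t))]=\E[R^\de_t f(Y_\de(t))]$ with $Y_\de$ the Euler scheme of the kinetic reference system. This causes two concrete gaps. First, your Novikov condition requires exponential integrability of $\int_0^T|\sigma^{-1}Z(s_\de,Y_\de(s_\de))|^2\d s$ along the \emph{Euler scheme} $Y_\de$, whereas the Harnack inequality \eqref{3.2} underlying Lemma \ref{t-3.1}(ii) is for the semigroup of the exact reference process; the lemma controls $\E[\e^{\beta\int_0^T G(s,Y(s_\de))\d s}]$ (exact process sampled at grid times), not the corresponding quantity for $Y_\de$, so your claim $\E R^\de_t=1$ and the bound $\sup_\de\E[(R^\de_t)^{p_0}]<\infty$ are not justified by the cited results. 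Second, and more seriously, for bounded measurable $f$ your term $\E[R^\de_t(f(Y_\de(t))-f(Y(t)))]$ genuinely requires a uniform-in-$\de$ hyperboundedness or density estimate for the degenerate Euler scheme; you correctly identify this as the main difficulty but leave it unproven, so \eqref{deg-6} is not established. The fix is not to prove such an estimate but to abandon the discretization of the reference process: keep the exact $(X,Y)$ and put the entire effect of the scheme into the change of measure, as in \eqref{edeg-9}--\eqref{edeg-12}. Your Lipschitz-test-function argument for \eqref{deg-7} would then also become unnecessary, since \eqref{deg-7} follows from \eqref{deg-6} directly (the test functions in \eqref{bL-dis} are bounded by $1$).
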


\begin{mythm}\label{t-deg-3}
Assume the conditions of Theorem \ref{t-deg-2} are fulfilled, and  further assume that there exist $K_2,\,m_2>0$, $\alpha\in (0,1]$ and a polynomial bounded function $g:\R^{2d}\ra \R_+$ such that
\begin{equation}\label{deg-8}
\begin{split}
  |Z(t,x_1,x_2)-Z(t,\,y_1,y_2)|&\leq K_2\big(1\!+\!|x_1|^{m_2}\!+\!|x_2|^{m_2}\!+\!|y_1|^{m_2}\!+\!|y_2|^{m_2}\big)|x_2-y_2|,\\
  |Z(t,x_1,x_2)-Z(s,x_1,x_2)|&\leq g(x_1,x_2)|t-s|^\alpha
\end{split}
\end{equation} for $t,\,s\in [0,T]$, $x_1,x_2,y_1,y_2\in\R^d$.
Then, for any bounded measurable function $f$ on $\R^{2d}$,
\begin{equation}\label{deg-9}
\big|\E f(X_\de^{(1)}(t),X_\de^{(2)}(t))-\E f(X^{(1)}(t),X^{(2)}(t))\big|\leq C \de^{\frac 12\wedge \alpha},
\end{equation}
and
\begin{equation}\label{deg-10}
W_{bL}\big(\mathscr L(X^{(1)}_\de(t),X_\de^{(2)}(t)),\mathscr L(X^{(1)}(t),X^{(2)}(t))\big)\leq C\de^{\frac 12\wedge \alpha}
\end{equation} for some positive constant $C$.
\end{mythm}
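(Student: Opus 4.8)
\noindent{\it Proof strategy.}\quad The plan is to follow the Girsanov change-of-measure scheme used in the proof of Theorem \ref{t-deg-2} and to upgrade the qualitative limit \eqref{deg-6}--\eqref{deg-7} to the rate \eqref{deg-9}--\eqref{deg-10} by exploiting the additional regularity \eqref{deg-8}. With $s_\de=[s/\de]\de$, I would set
\[
\Phi_\de(s)=\sigma^{-1}\Big[b\big(s_\de,X^{(1)}_\de(s_\de),X^{(2)}_\de(s_\de)\big)-b\big(s,X^{(1)}_\de(s),X^{(2)}_\de(s)\big)\Big]
\]
and introduce on $\mathcal F_t$ the probability measure $\q_\de$ with density $M^\de_t=\exp\big(-\int_0^t\langle\Phi_\de(s),\d W(s)\rangle-\frac12\int_0^t|\Phi_\de(s)|^2\d s\big)$. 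Under $\q_\de$ the process $\tilde W(t)=W(t)+\int_0^t\Phi_\de(s)\d s$ is a Brownian motion, so that, by \eqref{deg-5}, $(X^{(1)}_\de,X^{(2)}_\de)$ solves \eqref{deg-1} and hence $\mathscr L_{\q_\de}(X^{(1)}_\de(t),X^{(2)}_\de(t))=\mathscr L(X^{(1)}(t),X^{(2)}(t))$. That $M^\de$ is a true martingale (so that $\q_\de$ is indeed a probability measure) would be verified as in Theorem \ref{t-deg-2}: the scheme \eqref{deg-5} has piecewise-constant drift, hence is explicitly non-explosive, while the exponential integrability in (A2) controls the exponential moments entering the argument.

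Consequently, for every bounded measurable $f$ on $\R^{2d}$,
\begin{align*}
\big|\E f(X^{(1)}_\de(t),X^{(2)}_\de(t))-\E f(X^{(1)}(t),X^{(2)}(t))\big|&=\big|\E\big[(1-M^\de_t)f(X^{(1)}_\de(t),X^{(2)}_\de(t))\big]\big|\\
&\le\|f\|_\infty\,\E|1-M^\de_t|.
\end{align*}
Since $\mathrm{Ent}(\q_\de\mid\p)=\E_{\q_\de}[\log M^\de_t]=\tfrac12\E_{\q_\de}\!\int_0^t|\Phi_\de(s)|^2\d s$, Pinsker's inequality yields $\E|1-M^\de_t|\le(\E_{\q_\de}\!\int_0^t|\Phi_\de(s)|^2\d s)^{1/2}$. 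Hence the bound \eqref{deg-9}, and then \eqref{deg-10} upon taking the supremum over $f$ with $\|f\|_\infty+\|f\|_{\mathrm{Lip}}\le 1$, will follow once one shows $\E_{\q_\de}\!\int_0^t|\Phi_\de(s)|^2\d s\le C\de^{1\wedge 2\alpha}$.

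For the last estimate, note that under $\q_\de$ the path $(X^{(1)}_\de,X^{(2)}_\de)$ has the law of the exact solution $(X^{(1)},X^{(2)})$, so the quantity in question equals the corresponding $\p$-expectation for $(X^{(1)},X^{(2)})$. Decomposing $b=Z_0(x_2)+Z(t,x_1,x_2)$ and splitting $Z(s_\de,\cdot)-Z(s,\cdot)$ into a spatial and a temporal increment, I would use the Lipschitz continuity of $Z_0$ (which holds for $V\in\mathscr V$) together with the two bounds in \eqref{deg-8} to get, for $s\in[k\de,(k+1)\de)$,
\begin{align*}
|\Phi_\de(s)|\le{}&C\big(1+|X^{(1)}(s_\de)|^{m_2}+|X^{(2)}(s_\de)|^{m_2}+|X^{(1)}(s)|^{m_2}+|X^{(2)}(s)|^{m_2}\big)\,\big|X^{(2)}(s)-X^{(2)}(s_\de)\big|\\
&+C\,g\big(X^{(1)}(s),X^{(2)}(s)\big)\,\de^\alpha
\end{align*}
(plus, if $Z$ depends on $x_1$, a further term of the same type times $|X^{(1)}(s)-X^{(1)}(s_\de)|=|\int_{s_\de}^s X^{(2)}(r)\d r|\le\de\sup_{r\le T}|X^{(2)}(r)|$, which is of higher order in $\de$). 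The remaining steps are routine: on an interval of length $\le\de$ one has $\E|X^{(2)}(s)-X^{(2)}(s_\de)|^4\le C\de^2$ and $\E|X^{(1)}(s)-X^{(1)}(s_\de)|^2\le C\de^2$ provided $\sup_{r\le T}\E|b(r,X^{(1)}(r),X^{(2)}(r))|^2<\infty$ and $\sup_{r\le T}\E|X^{(i)}(r)|^k<\infty$ for all $k$; then the Cauchy--Schwarz inequality absorbs the polynomial weights, $\sup_{r\le T}\E\,g(X^{(1)}(r),X^{(2)}(r))<\infty$ handles the temporal term, and summing the $\sim t/\de$ sub-intervals gives $\E_\p\!\int_0^t|\Phi_\de|^2\d s\le C(\de+\de^{2\alpha})\le C\de^{1\wedge 2\alpha}$.

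The hard part is precisely the input just used: the moment bounds for the \emph{exact} degenerate solution $(X^{(1)},X^{(2)})$ and for $b(s,X^{(1)}(s),X^{(2)}(s))$, which cannot be obtained from a Lyapunov or linear-growth argument because $b$ is only assumed singular in the ambient framework. I would establish them exactly as in the proof of Theorem \ref{t-deg-2}: \eqref{deg-8} forces $Z$, hence $b=Z_0+Z$, to have polynomial growth, while comparison with the reference system $\d Y^{(1)}=Y^{(2)}\d t$, $\d Y^{(2)}=Z_0(Y^{(2)})\d t+\sigma\d W$ --- which has all polynomial moments since $Z_0$ is globally Lipschitz --- combined with the change of measure generated by $Z=b-Z_0$ and the exponential integrability (A2) (this is where the dimension-free Harnack inequality of \cite{GW12} for \eqref{deg-1} and the threshold $\eta>4\lambda Td$ are needed) transfers all polynomial moments of $(Y^{(1)},Y^{(2)})$, and the finiteness of $\E\,g(Y^{(1)}(t),Y^{(2)}(t))$, to $(X^{(1)},X^{(2)})$. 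A secondary technical point, again handled as in Theorem \ref{t-deg-2}, is the rigorous justification of the entropy identity and of the martingale property of $M^\de$ through a localization argument based on the explicit non-explosion of the scheme \eqref{deg-5}.
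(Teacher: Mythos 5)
Your rate mechanics are essentially the paper's: split $b(s_\de,\cdot)-b(s,\cdot)$ into the Lipschitz $Z_0$-increment, the spatial increment of $Z$ and the temporal increment of $Z$, bound the increments over a mesh interval by $C\de^{1/2}$ in $L^2$ after absorbing the polynomial weights of \eqref{deg-8} by Cauchy--Schwarz, and sum. Your entropy/Pinsker bound for $\E|1-M^\de_t|$ is also a legitimate (arguably slicker) substitute for the paper's route via $|\e^x-\e^y|\le(\e^x+\e^y)|x-y|$ and H\"older. The genuine gap is at the very first step: you apply Girsanov directly to the EM scheme, so verifying that $M^\de$ is a true martingale requires
\begin{equation*}
\E_\p\exp\Big[c\int_0^T\big|Z\big(s,X_\de^{(1)}(s),X_\de^{(2)}(s)\big)\big|^2\d s\Big]<\infty,
\end{equation*}
together with the analogous bound for the $Z_0$-increment along $X_\de$. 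Condition (A2) is an integrability condition with respect to $\mu_0$, and the only device the paper has for converting it into exponential moments \emph{along a process} is the Harnack inequality for the semigroup $P_t^0$ of the auxiliary process with globally Lipschitz drift $Z_0$ (Lemmas \ref{t-3.1} and \ref{t-3.2}). No such inequality is available for the EM scheme; indeed the introduction explains that for the time-delayed process the conditional quantity $\E[\int_s^t f(r,W([r/\de]\de))\d r\,|\,\mathscr F_s]$ is random and cannot be dominated by a deterministic interval function, which is precisely the obstruction you would meet when iterating conditional exponential moments over the mesh blocks. The same difficulty recurs when you invoke moment and increment bounds for the exact degenerate solution $(X^{(1)},X^{(2)})$ and for $b(r,X^{(1)}(r),X^{(2)}(r))$: with a singular $Z$ these are not "routine" and must themselves be transferred from a reference process by a further Girsanov step.

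The paper's architecture is designed to avoid exactly this. It fixes one auxiliary process $(X(t),Y(t))$ with drift $Z_0$ (eq. \eqref{edeg-1}), represents the exact solution as this process under $\q_3$ and the EM scheme as the \emph{same} process under $\q_5$, and then bounds $|\E f(X_\de^{(1)}(t),X_\de^{(2)}(t))-\E f(X^{(1)}(t),X^{(2)}(t))|$ by $\|f\|_\infty\,\E\big|\tfrac{\d\q_3}{\d\p}-\tfrac{\d\q_5}{\d\p}\big|$. Every subsequent estimate --- the exponential moments of $Z$ and $Z_0$, the Gaussian heat-kernel increment bounds $\E|Y(s)-Y(s_\de)|^{q_0}\le C\de^{q_0/2}$, and the polynomial moments that absorb the weights in \eqref{deg-8} --- is then taken along the nice process $Y$ under $\p$, where the Harnack inequality and Aronson-type bounds apply. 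To repair your argument you would either have to establish a conditional Harnack-type estimate for the EM scheme itself (new, and problematic for the reason above) or restructure the proof around the common auxiliary process as the paper does.
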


\section{Proofs of the main results}

\subsection{Non-degenerate Case}
The basic idea to prove the main results is to construct an auxiliary process which provides a new representation of   $(X(t))$ or its EM's approximation $(X_\de(t))$ based on the Girsanov theorem. Then the key point is to apply the Harnack inequality to verify Novikov's condition so that this kind of transformation is well-defined.   Precisely, for $V\in\mathscr V$, its associated process $(Y(t))$ is defined by \eqref{2.6}. The global Lipschitz condition of $Z_0$ ensures that $(Y(t))$ is nonexplosive. Let $T>0$ be given. Let us rewrite $(Y(t))$ into the following form:
\begin{equation}\label{rep-X}
\begin{split}
\d Y(t)&=b(t,Y(t))\d t +\sigma\big(\d W(t)-\sigma^{-1} b(t, Y(t))\d t+\sigma^{-1}Z_0(Y(t))\d t\big)\\
&=b(t,Y(t))\d t+\sigma\d \widehat W_1(t),
\end{split}
\end{equation}
where
\begin{equation}\label{hat-W}
\widehat W_1(t)=W(t)+\int_0^t\sigma^{-1}\big(Z_0(Y(s))-b(s,Y(s))\big)\d s=W(t)-\int_0^t\sigma^{-1}Z(s,Y(s))\d s.
\end{equation}
If Novikov's condition
\begin{equation}\label{Nov-1}
\E\exp\Big[\frac 12\int_0^T|\sigma^{-1}(Z(s,Y(s)))|^2\d s\Big]<\infty
\end{equation}
holds, then
\begin{equation}\label{q1}
\begin{aligned}
\mathbb Q_1&:=\exp\Big[\int_0^T\la \sigma^{-1}(Z(s,Y(s))),\d W(s)\raa-\frac 12\int_0^T|\sigma^{-1}(Z(s,Y(s)))|^2\d s\Big]\p
\end{aligned}
\end{equation}
is a probability measure. Applying the Girsanov theorem,  $(\widehat W_1(t))$ is a new Brownian motion under the probability $\q_1$. Moreover, $(Y(t),\widehat W_1(t))$ under $\q_1$ is also a solution of \eqref{2.1}. By the weak uniqueness of the solution to  \eqref{2.1}, $(Y(t))$ under $\q_1$ has the same distribution as that of $(X(t))$ under $\p$.

In order to deal with  EM's approximation $(X_\de(t))$, rewrite \eqref{2.6} into the following form:
\begin{equation}\label{rep-EM}
\d Y(t)=b(t_\de,Y(t_\de))\d t+\sigma \d \widehat W_2(t),
\end{equation}
where
\begin{equation}\label{tilde-W}
\widehat W_2(t)=W(t)+\int_0^t\sigma^{-1} (Z_0(Y(s))-b(s_\de,Y(s_\de)))\d s,\ t\in [0,T].
\end{equation}
If
\begin{equation}\label{Nov-2}
\E\exp\Big[\frac 12\int_0^T|\sigma^{-1}(Z_0(Y(s))-b(s_\de,Y(s_\de)))|^2\d s\Big]<\infty
\end{equation}
holds, then
\begin{equation}\label{q2}
\begin{aligned}
\mathbb Q_2&:=\exp\Big[-\int_0^T\la \sigma^{-1}(Z_0(Y(s))-b(s_\de,Y(s_\de))),\d W(s)\raa\\
&\quad-\frac 12\int_0^T|\sigma^{-1}(Z_0(Y(s))-b(s_\de,Y(s_\de)))|^2\d s\Big]\p
\end{aligned}
\end{equation}
is a probability measure. Hence, the Girsanov theorem yields that $(\widehat W_2(t))$ is a new Brownian motion under the probability $\q_2$. $(Y(t),\widehat W_2(t))$ under $\q_2$ is a solution of \eqref{2.2}. By the uniqueness of solution for \eqref{2.2}, $(Y(t))$ under $\q_2$ has the same distribution as that of $(X_\de(t))$ under $\p$.

Let us first make some preparations before proving the main results.

\begin{mylem}\label{t-3.1}
Let $G:\R_+\times\R^d\ra \R_+$ be a measurable function and $\beta>0$ be a constant.
\begin{itemize}
\item[(i)]
If there exists a constant $\xi>d$ such that $\sup_{t\in[0,T]}\mu_0(G^{\xi}(t,\cdot))<\infty$, then
\begin{equation}\label{3.1.5}
\E\Big[\int_0^T G(s,Y(s))\d s\Big]\leq C \sup_{t\in[0,T]}\mu_0(G^{\xi}(t,\cdot))^{\frac 1\xi}<\infty
\end{equation} for some constant $C=C(T,\xi,K_0)>0$.
\item[(ii)] If there exists a constant $\eta$ such that $\eta>\beta T d$ and
$\dis \sup_{t\in[0,T]}\mu_0\big(\e^{\eta G(t,\cdot)}\big)<\infty$,
then
\begin{equation}\label{3.1}
\E\Big[\e^{\beta\int_0^T G(s,Y(s))\d s}\Big]<\infty, \quad \text{and}\ \quad \E\Big[\e^{\beta\int_0^T G(s,Y(s_\de))\d s}\Big]<\infty.
\end{equation}
\end{itemize}
\end{mylem}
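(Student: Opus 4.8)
The plan is to derive both parts from a single smoothing estimate for the transition semigroup $P^0_t$ of the auxiliary diffusion $(Y(t))$ of \eqref{2.6}: for every $p>1$, every non-negative measurable $f$ on $\R^d$, and every $s\in(0,T]$,
\[
P^0_s f(x_0)\ \le\ C\,(1-\e^{-K_0 s})^{-d/p}\,\mu_0\big(f^p\big)^{1/p},
\]
with $C$ depending only on $p$, $K_0$, $\lambda$ and $x_0$. Two facts already at hand make this possible. First, since $L_0$ is symmetric in $L^2(\mu_0)$, the probability measure $\mu_0$ is invariant for $P^0_t$, so $\mu_0(P^0_t g)=\mu_0(g)$ for every $g\ge0$. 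Second, the Lipschitz-type condition on $Z_0$ built into the definition of $\mathscr V$ yields, as already noted after \eqref{2.4}, the dimension-free Harnack inequality for $P^0_t$: for $p>1$, $f\ge0$ and $x,y\in\R^d$,
\[
\big(P^0_s f(x)\big)^p\ \le\ \big(P^0_s f^p(y)\big)\,\exp\!\Big(\frac{C'\,|x-y|^2}{1-\e^{-K_0 s}}\Big),\qquad C'=C'(p,K_0,\lambda).
\]

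To get the smoothing estimate I would rewrite the Harnack inequality as $(P^0_s f(x_0))^p\exp(-C'|x_0-y|^2/(1-\e^{-K_0 s}))\le P^0_s f^p(y)$, integrate in $y$ against $\mu_0$, and use invariance to replace the right-hand side by $\mu_0(f^p)$. It then remains to bound $\int_{\R^d}\exp(-C'|x_0-y|^2/(1-\e^{-K_0 s}))\,\mu_0(\d y)$ from below. Restricting the integral to the ball $B(x_0,1-\e^{-K_0 s})$, on which $|x_0-y|^2/(1-\e^{-K_0 s})\le1-\e^{-K_0 s}\le1$, and using that the density $\e^{-V}$ is continuous and strictly positive, hence bounded below by some $\kappa>0$ near $x_0$, one obtains this integral $\ge c\,(1-\e^{-K_0 s})^d$ for a positive constant $c$; dividing through gives the displayed estimate.

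Part (i) then follows at once: by Tonelli, $\E[\int_0^T G(s,Y(s))\,\d s]=\int_0^T P^0_s G(s,\cdot)(x_0)\,\d s$, and the smoothing estimate with $p=\xi$ bounds this by $C\sup_{t\le T}\mu_0(G^\xi(t,\cdot))^{1/\xi}\int_0^T(1-\e^{-K_0 s})^{-d/\xi}\,\d s$, the last integral being finite because $\xi>d$ forces $d/\xi<1$; this is \eqref{3.1.5}. For part (ii), Jensen's inequality for the probability measure $\frac1T\d s$ on $[0,T]$ and the convex function $\exp$ gives $\exp(\beta\int_0^T G(s,Y(s))\,\d s)\le\frac1T\int_0^T\exp(\beta T\,G(s,Y(s)))\,\d s$; taking expectations and using Tonelli,
\[
\E\Big[\exp\Big(\beta\!\int_0^T\! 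G(s,Y(s))\,\d s\Big)\Big]\ \le\ \frac1T\int_0^T P^0_s\big(\e^{\beta T G(s,\cdot)}\big)(x_0)\,\d s .
\]
Since $\eta>\beta T d$, I would set $p:=\eta/(\beta T)>d$; the smoothing estimate with this $p$, applied to $f=\e^{\beta T G(s,\cdot)}$ so that $f^p=\e^{\eta G(s,\cdot)}$, bounds the integrand by $C(1-\e^{-K_0 s})^{-d/p}\sup_{t\le T}\mu_0(\e^{\eta G(t,\cdot)})^{1/p}$, and $\int_0^T(1-\e^{-K_0 s})^{-d/p}\,\d s<\infty$ because $p>d$, which yields the first inequality in \eqref{3.1}. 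For the second, the same Jensen step gives $\E[\exp(\beta\int_0^T G(s,Y(s_\de))\,\d s)]\le\frac1T\int_0^T P^0_{s_\de}(\e^{\beta T G(s,\cdot)})(x_0)\,\d s$; for $s\in[0,\de)$ one has $s_\de=0$ and the integrand equals $\e^{\beta T G(0,x_0)}<\infty$, while for $s\ge\de$ one has $s_\de\ge\de>0$, so the smoothing estimate applies with the harmless factor $(1-\e^{-K_0 s_\de})^{-d/p}\le(1-\e^{-K_0\de})^{-d/p}$, and since $[0,T]$ is covered by finitely many intervals $[k\de,(k+1)\de)$ the $s$-integral is finite for each fixed $\de>0$ (a more careful count would even give a bound uniform in $\de$).

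The main obstacle is the smoothing estimate of the first step, i.e. turning the dimension-free Harnack inequality together with the invariance of $\mu_0$ into the quantitative bound $P^0_s f(x_0)\le C(1-\e^{-K_0 s})^{-d/p}\mu_0(f^p)^{1/p}$ with the correct short-time rate; after that the argument is routine, and the sharp hypotheses $\xi>d$ and $\eta>\beta T d$ are consumed precisely in making $\int_0^T(1-\e^{-K_0 s})^{-d/p}\,\d s$ converge. The only delicate point inside that step is the lower bound $\int_{\R^d}\exp(-C'|x_0-y|^2/(1-\e^{-K_0 s}))\,\mu_0(\d y)\ge c\,(1-\e^{-K_0 s})^d$ as $s\downarrow0$, which rests on nothing beyond continuity and strict positivity of the density $\e^{-V}$ near the fixed starting point $x_0$.
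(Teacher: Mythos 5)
Your argument is correct and follows essentially the same route as the paper: the dimension-free Harnack inequality for $P_t^0$ (from the Lipschitz condition on $Z_0$, via Wang's result), integrated in $y$ against $\mu_0$ over a small ball around $x_0$ together with the invariance of $\mu_0$ and a lower bound $\mu_0(B(x_0,r))\ge c\,r^d$, yields exactly the paper's smoothing estimate \eqref{3.5}, after which Jensen's inequality in time and the convergence of $\int_0^T(1-\e^{-K_0 s})^{-d/p}\,\d s$ for $p>d$ finish both parts as in \eqref{3.6}--\eqref{3.7}. The only cosmetic differences are the radius of the ball and the choice $p=\eta/(\beta T)$ instead of $d<p<\eta/(\beta T)$; one should also, as the paper does, first apply the Harnack inequality to the truncation $G\wedge N$ and pass to the limit by Fatou, since the inequality is stated for bounded functions.
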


\begin{proof} We first prove the assertion (ii), and then (i) follows immediately.
  Let $P_t^0$ denote the semigroup corresponding to the process $(Y(t))$ defined by \eqref{2.6} with initial value $Y(0)=x$. Hence, the semigroup $P_t^0$ is symmetric w.r.t. $\mu_0$. Since $V\in\mathscr V$, according to
  \cite[Theorem 1.1]{Wa11}, for $p>1$, the following Harnack inequality holds:
  \begin{equation}\label{3.2}
    \Big(P_t^0f(x)\Big)^p\leq P_t^0f^p(y)\exp\Big[\frac{K_0\sqrt{p}}{\sqrt{p}-1}\cdot\frac{|x-y|^2}{1-\e^{-K_0t}}\Big],\quad \forall\,f\in \mathscr B_b^+(\R^d).
  \end{equation}
  Applying the Harnack inequality \eqref{3.2}, we get for $\gamma>0$ and $N>0$
  \begin{align*}&\Big\{\E \e^{\gamma G(t,Y(t))\wedge N}\Big\}^p=\Big\{P_t^0\e^{\gamma G(t,\cdot)\wedge N}\Big\}^p(x)\\
  &\leq \Big\{P_t^0\e^{\gamma p G(t,\cdot)\wedge N}\Big\}(y)\exp\Big[\frac{K_0\sqrt{p}}{\sqrt{p}-1}\cdot\frac{|x-y|^2}{1-\e^{-K_0t}}\Big].
  \end{align*}
  Passing to the limit as $N\ra +\infty$, it follows from Fatou's lemma that
  \begin{equation}\label{3.3}
  \Big\{P_t^0\e^{\gamma G(t,\cdot)}\Big\}^p(x)\leq  \Big\{P_t^0\e^{\gamma pG(t,\cdot)}\Big\}(y)\exp\Big[\frac{K_0\sqrt{p}}{\sqrt{p}-1}\cdot\frac{|x-y|^2}{1-\e^{-K_0t}}\Big].
  \end{equation}
  Denote $B(x,r)=\{y\in\R^d; |y-x|\leq r\}$ for $r>0$, $x\in\R^d$. Integrating both sides of \eqref{3.3} w.r.t. $\mu_0$ over the set $B(x, \sqrt{1-\e^{-K_0t}})$, we obtain
  \begin{equation}\label{3.4}
  \begin{split}
  &\Big\{P_t^0\e^{\gamma G(t,\cdot)}(x)\Big\}^{p}\mu_0\big(B\big(x,\sqrt{1-\e^{-K_0t}}\big)\big)\\
  &\leq \int_{B(x,\sqrt{1-\e^{-K_0t}})}\big\{P_t^0\e^{\gamma p G(t,\cdot)}\big\}(y)\e^{\frac{K_0 \sqrt{p}}{\sqrt{p}-1} \cdot \frac{|x-y|^2}{1-\e^{-K_0t}}}\mu_0(\d y)\\
  &\leq \int_{B(x,\sqrt{1-\e^{-K_0t}})}\big\{P_t^0\e^{\gamma p G(t,\cdot)}\big\}(y)\e^{\frac{K_0\sqrt{p}}{\sqrt{p}-1} }\mu_0(\d y)\\
  &\leq \e^{\frac{K_0\sqrt{p}}{\sqrt{p}-1} }\mu_0(\e^{\gamma p G(t,\cdot)}).
  \end{split}
  \end{equation}
  Since $\mu_0$ has strictly positive and continuous density $\e^{-V}$ w.r.t. the Lebesgue measure, there exists $\Gamma\in C(\R^d;(0,\infty))$ such that $\mu_0(B(x,t))\geq \Gamma(x) t^d$ for $t\in (0,1]$ and $x\in\R^d$. Invoking \eqref{3.4}, we obtain
  \begin{equation}\label{3.5}
  \E\e^{\gamma G(t,Y(t))}\leq \Gamma(x)^{-\frac 1p}\e^{\frac{K_0}{p-\sqrt p}}\mu_0\Big(\e^{\gamma p G(t,\cdot)}\Big)^{\frac 1p}\frac{1}{\big(1-\e^{-K_0t}\big)^{  d/p}}, \ \quad t\in(0,T].
  \end{equation}
  Combining this with Jensen's inequality, we get
  \begin{equation}\label{3.6}
  \begin{aligned}
    \E\Big[\e^{\beta\int_0^TG(t,Y(t))\d t}\Big]&\leq \frac 1T\int_0^T \E\Big[\e^{\beta TG(t,Y(t))}\Big]\d t\\
    &\leq \frac{C}{\Gamma(x)^{1/p}}\sup_{t\in[0,T]}\mu_0\Big(\e^{\beta T pG(t,\cdot)}\Big)^{1/p}\int_0^T\frac{1}{(1-\e^{-K_0t})^{d/p}} \d t,
  \end{aligned}
  \end{equation} where $C=C(p,T,K_0)$ is a constant.
  Taking $d<p<\frac{\eta}{\beta T }$ in \eqref{3.6},   it follows from the assumed condition in (ii) that
  \[\E\Big[\e^{\beta\int_0^TG(t,Y(t))\d t}\Big]<\infty.\]
  Similarly, by taking $d<p<\frac{\eta}{\beta T}$ and using the Harnack inequality \eqref{3.2}, we can deduce that
   \begin{equation}\label{3.7}
   \begin{aligned}
   &\E\Big[\e^{\beta\int_0^T G(s,Y(s_\de))\d s}\Big]\\
   &=\E\Big[\e^{\beta\int_0^\de G(s,Y(s_\de))\d s+\beta\int_\de^T G(s,Y(s_\de))\d s}\Big]\\
   &\leq \frac{\e^{\beta\int_0^\de G(s,x)\d s} }{T-\de}\int_\de^T \E\Big[\e^{\beta(T-\de) G(Y(s_\de))}\Big]\d s\\
   &\leq \frac{\e^{\frac{K_0}{p-\sqrt p} }}{ T-\de}\cdot \frac{\e^{\beta\int_0^\de G(s,x)\d s}}{\Gamma(x)^{1/p}}\cdot \sup_{t\in[\de,T]}\!\mu_0\big(\e^{\beta(T-\de)p G(t,\cdot)}\big)^{1/p}\int_\de^T \!\!\frac{1}{(1\!-\!\e^{-K_0 s_\de})^{d/p}}\d s\\
   &\leq C \sup_{t\in[0,T]}\!\mu_0\big(\e^{\beta(T-\de)p G(t,\cdot)}\big)^{1/p}\int_0^T \!\!\frac{1}{(1-\e^{-K_0 s})^{d/p}}\d s<\infty,
   \end{aligned}
   \end{equation} where $C=C(p,T,K_0)$ is a constant.

   In order to establish \eqref{3.1.5}, noticing $\xi>d$,  we obtain from \eqref{3.5} that
   \begin{equation}\label{n-1}
     \E[G(s,Y(s))]\leq \frac{\e^{\frac{K_0}{\xi-\sqrt{\xi}} }\mu_0(G^{\xi}(s,\cdot))^{\frac 1\xi}}
     {\Gamma(x)^{\frac{1}{\xi}}(1-\e^{-K_0s})^{\frac{d}{\xi}}},\quad s\in (0,T],
   \end{equation}
   and hence
   \begin{align*}
     \E\Big[\int_0^T G(s,Y(s))\d s\Big]\leq \frac{\e^{\frac{K_0}{\xi-\sqrt{\xi}} }}{\Gamma(x)^{\frac{1}{\xi}}}
     \Big(\int_0^T\!\!\frac{1}{(1-\e^{-K_0s})^{ d/\xi }}\d s\Big)\sup_{s\in[0,T]}\mu_0(G^{\xi}(s,\cdot))^{\frac 1\xi}<\infty.
   \end{align*}
   The proof is complete.
\end{proof}

\begin{mylem}\label{t-3.3}
Assume that there exists $\eta>2\lambda T d$ such that
\begin{equation}\label{3.17}
\sup_{t\in[0,T]}\mu_0\Big(\e^{\eta|Z(t,\cdot)|^2}\Big)<\infty.
\end{equation}
Then for every $\gamma>1$,
\begin{equation}\label{3.18}
\E\big[\sup_{0\leq t\leq T}|X(t)|^{\gamma}\big]<\infty.
\end{equation}
\end{mylem}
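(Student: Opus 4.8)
The plan is to transfer the estimate to the auxiliary diffusion $(Y(t))$ through the Girsanov transform already set up above, and then to control the Radon--Nikodym density by means of Lemma \ref{t-3.1}(ii). First I would verify that the change of measure is legitimate: by $(H_\sigma)$ one has $|\sigma^{-1}Z(s,Y(s))|^2\le\lambda|Z(s,Y(s))|^2$, so Novikov's condition \eqref{Nov-1} follows from Lemma \ref{t-3.1}(ii) applied with $G(t,\cdot)=|Z(t,\cdot)|^2$ and $\beta=\lambda/2$, which is admissible since $\tfrac\lambda2 Td<2\lambda Td<\eta$ and \eqref{3.17} is exactly the required integrability. Consequently $\q_1$ from \eqref{q1} is a probability measure, $(\widehat W_1(t))$ is a $\q_1$-Brownian motion, and by the assumed weak uniqueness for \eqref{2.1} the law of $(Y(t))_{t\in[0,T]}$ under $\q_1$ equals that of $(X(t))_{t\in[0,T]}$ under $\p$. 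Writing $R_1=\d\q_1/\d\p$ (and using truncation plus monotone convergence since $\sup_t|Y(t)|^\gamma$ is unbounded), this yields $\E[\sup_{0\le t\le T}|X(t)|^\gamma]=\E[R_1\sup_{0\le t\le T}|Y(t)|^\gamma]$.

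Next, fix $p>1$ close to $1$ and let $p'=p/(p-1)$. Hölder's inequality gives $\E[R_1\sup_t|Y(t)|^\gamma]\le(\E R_1^p)^{1/p}\,(\E\sup_t|Y(t)|^{\gamma p'})^{1/p'}$. Since $V\in\mathscr V$ forces $Z_0$ to be globally Lipschitz, hence of linear growth, the standard moment bound for SDEs with Lipschitz coefficients (cf. \cite{Mao}) shows $\E[\sup_{0\le t\le T}|Y(t)|^q]<\infty$ for every $q\ge1$, so the second factor is finite for any $p>1$. Everything then reduces to finding $p>1$ with $\E R_1^p<\infty$. Set $M_t=\int_0^t\la\sigma^{-1}Z(s,Y(s)),\d W(s)\raa$, so $R_1=\exp(M_T-\tfrac12\la M\raa_T)$; write $R_1^p=\exp(pM_T-p^2\la M\raa_T)\cdot\exp((p^2-\tfrac p2)\la M\raa_T)$ and apply Cauchy--Schwarz to get $\E R_1^p\le\big(\E\exp(2pM_T-2p^2\la M\raa_T)\big)^{1/2}\big(\E\exp((2p^2-p)\la M\raa_T)\big)^{1/2}$. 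The first factor is $\le1$, because $\exp(2pM_t-2p^2\la M\raa_t)$ is the stochastic exponential of $2pM$, hence a nonnegative supermartingale equal to $1$ at $t=0$. For the second factor, $(H_\sigma)$ gives $\la M\raa_T\le\lambda\int_0^T|Z(s,Y(s))|^2\d s$, so Lemma \ref{t-3.1}(ii) with $G=|Z|^2$ and $\beta=(2p^2-p)\lambda$ makes it finite as soon as $(2p^2-p)\lambda Td<\eta$; since $2p^2-p\to1$ as $p\downarrow1$ and $\eta>2\lambda Td$, such a $p>1$ exists, and then \eqref{3.17} is precisely the hypothesis of Lemma \ref{t-3.1}(ii). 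Combining the steps gives \eqref{3.18}.

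I expect the only point needing care to be this final arithmetic constraint: the factor $2$ in the assumption $\eta>2\lambda Td$ is exactly what leaves room to take $p$ strictly larger than $1$ (at $p=1$ the relevant coefficient is merely $\lambda Td$, well below $\eta$), while the Girsanov/Novikov reduction and the Lipschitz moment bound for $(Y(t))$ are entirely routine.
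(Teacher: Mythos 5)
Your proof is correct and follows essentially the same route as the paper: represent $X$ via the auxiliary process $Y$ and Girsanov, apply H\"older to split off $\E[(\d\q_1/\d\p)^p]$, bound $\E[\sup_t|Y(t)|^{\gamma q}]$ by the Lipschitz moment estimate, and control $\E R_1^p$ by the same Cauchy--Schwarz decomposition $\E R_1^p\le(\E\exp(2pM_T-2p^2\la M\raa_T))^{1/2}(\E\exp(p(2p-1)\la M\raa_T))^{1/2}$ with Lemma \ref{t-3.1}(ii). The one (harmless) deviation is that you bound the first factor by $1$ via the supermartingale property of the stochastic exponential, whereas the paper invokes Novikov again to make it a true martingale; your variant is slightly cleaner and, as you note, would even work under $\eta>\lambda Td$.
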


\begin{proof}
Denote by
\begin{equation}\label{3.18.5}
M_t=\int_0^t\la \sigma^{-1}Z(s,Y(s)),\d W(s)\raa,\quad \text{and}\quad \la M\raa_t=\int_0^t|\sigma^{-1}Z(s,Y(s))|^2\d s.
\end{equation}
Since \eqref{3.17} holds for $\eta>2\lambda Td$ and $|\sigma^{-1} Z(t,\cdot)|^2\leq \lambda |Z(t,\cdot)|^2$, by virtue of Lemma \ref{t-3.1}, we get
\[\E\e^{\frac 12\la M\raa_t}<\infty, \quad t\in(0,T],\]
which implies that $t\mapsto \exp\big(M_t-\frac 12 \la M\raa_t\big)$ is an exponential martingale for $t\in[0,T]$. The Girsanov theorem yields that $(\widehat W_1(t))_{t\leq T}$ is a Brownian motion under $\q_1$ and further $(Y(t))_{t\leq T}$ under $\q_1$ defined by \eqref{q1} is also a solution of \eqref{2.1}. Then the weak uniqueness of SDE \eqref{2.1} means that $(X(t))_{t\leq T}$ under $\p$ has the same distribution as $(Y(t))_{t\leq T}$ under $\q_1$. Denote by $\E=\E_\p$ the expectation w.r.t. $\p$, and by $\E_{\q_1}$ the expectation w.r.t. $\q_1$.
Therefore,
\begin{equation}\label{3.19}
\begin{aligned}
  \E_{ }\big[\sup_{0\leq t\leq T}|X(t)|^\gamma\big]&=\E_{\q_1}\big[\sup_{0\leq t\leq T} |Y(t)|^\gamma\big]=\E_{ }\Big[\frac{\d \q_1}{\d \p}\cdot\sup_{0\leq t\leq T}|Y(t)|^\gamma\Big]\\
  &\leq \E_{ }\Big[\Big(\frac{\d \q_1}{\d \p}\Big)^p\Big]^{\frac 1p}\,\E_{ }\Big[\sup_{0\leq t\leq T} |Y(t)|^{\gamma q}\Big]^{\frac 1q},\quad p,\,q>1,\ \frac 1p+\frac 1q=1.
\end{aligned}
\end{equation}
By \cite[Theorem 2.4.4]{Mao}, the global Lipschitz condition of $Z_0$ yields that
\begin{equation}\label{3.20}
\E_{}\big[\sup_{0\leq t\leq T} |Y(t)|^{\gamma q}\big]<\infty.
\end{equation}
Meanwhile,
\begin{equation}\label{3.21}
  \begin{split}
    \E\Big[\Big(\frac{\d \q_1}{\d \p}\Big)^{p}\Big]&=\E\big[\exp\big(pM_T-\frac{p}{2}\la M\raa_T\big)\big]\\
    &\leq \E\Big[\exp\Big(2p  M_T- 2p^2 \la M\raa_T\Big)\Big]^{\frac 1{2}}\E\Big[\exp\Big( p(2p-1) \la M\raa_T\Big)\Big]^{\frac{1}{2}}.
  \end{split}
  \end{equation}
As $\lim_{p\downarrow 1} 2p^2=2$ and $\lim_{p\downarrow 1} p(2p-1)=1$, there exists $p_0>1$ such that $2p_0^2 \lambda Td<\eta$ and hence $p_0(2p_0-1)\lambda Td<\eta$. So, by Lemma \ref{t-3.1},
\[\E\big[\exp(2p_0^2\la M\raa_T)\big]<\infty,\quad \E\big[\exp\big(p_0(2p_0-1)\la M\raa_T\big)\big]<\infty.\] This implies that $t\mapsto \exp\big(2p_0M_t-2p_0^2\la M\raa_t\big)$ is an exponential martingale for $t\in[0,T]$. Consequently, we derive from \eqref{3.21} that
\begin{equation}\label{3.22}
\E\Big[\Big(\frac{\d \q_1}{\d \p}\Big)^{p_0}\Big]<\infty.
\end{equation}
Inserting \eqref{3.22}, \eqref{3.20} into \eqref{3.19} by taking $p=p_0$ and $q=p_0/(p_0-1)$, we obtain the desired result \eqref{3.18}.
\end{proof}

\noindent\textbf{Proof of Theorem \ref{t-0}} Similar to the representation of $(X(t))$ through the auxiliary process $(Y(t))$ determined by \eqref{2.6}, $(\wt X(t))$ can also be represented through $(Y(t))$. Indeed, setting \[\wt W(t)=W(t)-\int_0^t\sigma^{-1}   \wt Z(s, Y(s))\d s  \quad \text{ for $t\in [0,T]$},\]
rewrite $(Y(t))$ as
\[\d Y(t)=\tilde b(t,Y(t))\d t+\sigma \d \wt W(t),\]
then
\begin{equation}\label{0-1}
\wt \q:=\exp\Big[\int_0^T\la \sigma^{-1}\wt Z(s,Y(s)),\d W(s)\raa-\frac 12\int_0^T|\sigma^{-1}\wt Z(s,Y(s))|^2\d s\Big]\p
\end{equation} is a probability measure if
\begin{equation}\label{0-2}
\E\Big\{\exp\Big[\frac 12\int_0^T|\sigma^{-1}\wt Z(s,Y(s))|^2\d s\Big]\Big\}<\infty.
\end{equation}
Moreover, by Lemma \ref{t-3.1}, under condition (H1), the estimate \eqref{0-2} holds. Therefore, $\wt \q$ is a probability measure and further $(\wt W(t))_{t\in[0,T]}$ is a new Brownian motion under $\wt Q$ according to the Girsanov theorem. The uniqueness of the solution to SDE \eqref{u-1} yields that $(Y(t))_{t\in[0,T]}$ under $\wt \q$ has the same distribution of $(\wt X(t))_{t\in[0,T]} $ under $\p$.

Consequently, for any bounded measurable function $f$ on $\R^d$ with $\|f\|_\infty\leq 1$,
it holds,
\begin{align*}
  \big|\E f(X(t))-\E f(\wt X(t))\big|&=\big|\E_{\q_1} f(Y(t))-\E_{\wt \q} f(Y(t))\big|\\
  &=\Big|\E\Big[\Big(\frac{\d \q_1}{\d \p}-\frac{\d \wt \q}{\d \p}\Big) f(Y(t))\Big]\Big|\\
  &\leq \E\Big|\frac{\d \q_1}{\d \p}-\frac{\d \wt\q}{\d \p}\Big|,\quad t\in[0,T].
\end{align*}  
Setting
\[M_T=\int_0^T\la\sigma^{-1}Z(s,Y(s)),\d W(s)\raa,\quad \wt M_T=\int_0^T\la \sigma^{-1}\wt Z(s,Y(s)),\d W(s)\raa,\]
and
\[\la M\raa_T=\int_0^T|\sigma^{-1}Z(s,Y(s))|^2 \d s,\ \ \la \wt M\raa_T=\int_0^T|\sigma^{-1}\wt Z(s,Y(s))|^2\d s,\]
by using $|\e^x-\e^y|\leq (\e^x+\e^y)|x-y|$ for all $x,\,y\in \R$, we obtain that
\begin{equation}\label{0-3}
\begin{split}
  &\big|\E f(X(t))-\E f(\wt X(t))\big|\\
  &\leq \E\Big[\Big(\frac{\d \q_1}{\d \p}+\frac{\d \wt\q}{\d \p}\Big)\big|M_T-\wt M_T-\frac 12\la M\raa_T+\frac12 \la \wt M\raa_T\big|\Big]\\
  &\leq \E\Big[\Big(\frac{\d \q_1}{\d \p}+\frac{\d \wt \q}{\d \p}\Big)^p\Big]^{\frac 1p}\E\big[\big|M_T-\wt M_T-\frac 12\la M\raa_T+\frac 12 \la \wt M\raa_T\big|^q\big]^{\frac 1q}
\end{split}
\end{equation} for $p,\,q>1$ with $1/p+1/q=1$.

For the first term in \eqref{0-3}, since $\eta>2\lambda Td$, the estimate \eqref{3.22} in Lemma \ref{t-3.3} implies that there exists some $p$ satisfying
\[1<p<p_0=\Big(\sqrt{\frac{\eta}{2\lambda T d}}\Big)\wedge 2\] such that
\begin{equation}\label{0-4}
\E\Big[\Big(\frac{\d \q_1}{\d \p}\Big)^p\Big]<\infty,\quad \text{and }\ \ \E\Big[\Big(\frac{\d \wt \q}{\d \p}\Big)^p\Big]<\infty.
\end{equation}

For the second term of \eqref{0-3}, let us consider first the estimate of $\E\big[|M_T-\wt M_T|^q\big]$ then the estimate of $\E\big[\big|\la M\raa_T-\la \wt M\raa_T\big|^q\big]$.
As $q=p/(p-1)>2$, it follows from Burkholder-Davis-Gundy's inequality and Jensen's inequality that
\begin{align*}
  \E\big[|M_T-\wt M_T|^q\big]
  &\leq C_q\E\Big[\Big(\int_0^T|\sigma^{-1}(Z-\wt Z)(s,Y(s))|^2\d s\Big)^{\frac{q}{2}}\Big]\\
  &\leq C_qT^{\frac q 2-1}\lambda^{\frac q2}\E\Big[\int_0^T|Z-\wt Z|^{q}(s,Y(s))\d s\Big].
\end{align*}
Because
\[\sup_{t\in[0,T]}\mu_0(\e^{\eta|Z(t,\cdot)|^2})<\infty,\quad \sup_{t\in[0,T]}\mu_0(\e^{\eta|\wt Z(t,\cdot)|^2})<\infty,\]
it follows that for every $\xi>d$
\begin{equation}\label{0-5}
\sup_{t\in [0,T]}\mu_0\big(|Z(t,\cdot)-\wt Z(t,\cdot)|^{\frac{q\xi}{2}}\big)<\infty.
\end{equation}
Then, the inequality \eqref{n-1} in Lemma \ref{t-3.1} implies that
\begin{equation*}
\E\Big[ |Z(s,Y(s))-\wt Z(s,Y(s))|^{q} \Big]\leq \frac{\e^{\frac{K_0}{\xi-\sqrt{\xi}} }\mu_0\big(|Z(s,\cdot)-\wt Z(s,\cdot)|^{ q\xi }\big)^{\frac 1\xi}}{\Gamma(x_0)^{\frac 1\xi}(1-\e^{-K_0s})^{\frac d\xi}},\quad s\in[0,T],
\end{equation*} where $x_0$ denotes the initial value of $(Y(t))$.
Therefore,
\begin{equation}\label{0-6}
\E\big[|M_T-\wt M_T|^q\big]\leq C_q T^{\frac q2-1}\lambda^{\frac q2}\int_0^T\frac{\d s}{(1-\e^{-K_0s})^{\frac{d}{\xi}}}\Big(\int_{\R^d}|Z(s,y)-\wt Z(s,y)|^{ q\xi }\d \mu_0(y)\Big)^{\frac 1\xi}.
\end{equation}

To proceed,
\begin{align*}
  &\E\big[\big|\la M\raa_T-\la \wt M\raa_T\big|^q\big]\\
  &\leq \E\Big[\Big(\int_0^T|\sigma^{-1}(Z-\wt Z)(s,Y(s))|\big(|\sigma^{-1}Z|+|\sigma^{-1}\wt Z|\big)(s,Y(s))\d s\Big)^q\Big]\\
  &\leq \E\Big[\Big(\!\int_0^T\!\!|\sigma^{-1}(Z\!-\!\wt Z)|^{\gamma}(s,Y(s))\d s\Big)^{q}\Big]^{\frac1\gamma}\E\Big[\Big(\!\int_0^T \!\! \big(|\sigma^{-1}Z|\!+\!|\sigma^{-1}\wt Z|\big)^{\gamma'}(s,Y(s))\d s\Big)^{q}\Big]^{\frac{1}{\gamma'}},
\end{align*} where $\gamma,\,\gamma'>1$ satisfy $1/\gamma+1/\gamma'=1$.
By \eqref{0-5} and Lemma \ref{t-3.1},
\begin{equation}\label{0-7}
\E\Big[\Big(\int_0^T\big(|\sigma^{-1}Z|+|\sigma^{-1}\wt Z|\big)^{\gamma'}(s,Y(s))\d s\Big)^q\Big]^{\frac{1}{\gamma'}}<\infty.
\end{equation} Applying \eqref{n-1}  and Jensen's inequality again, we get, for every $\xi>d$,
\begin{equation}\label{0-8}
\begin{split}
  &\E\Big[\Big(\int_0^T|\sigma^{-1}(Z-\wt Z)|^\gamma(s,Y(s))\d s\Big)^{q}\Big]^{\frac1{\gamma}}\\
   &\leq \lambda^{\frac q2}T^{\frac{q-1}{\gamma}}\frac{\e^{\frac{K_0}{\xi-\sqrt{\xi}} }}{\Gamma(x_0)^{\frac 1\xi}} \Big\{\int_0^T\frac{\d s}{(1-\e^{-K_0 s})^{\frac d\xi}}\Big(\int_{\R^d}|Z(s,y)-\wt Z(s,y)|^{\gamma q\xi}\d\mu_0(y)\Big)^{\frac 1\xi}\Big\}^{\frac1\gamma}.
\end{split}
\end{equation}
Invoking \eqref{0-6}, \eqref{0-7} and \eqref{0-8},  for every $\xi>d$, $\gamma>1$, there exists a constant $C=C(K_0,T, \lambda,\xi,\gamma,q)$ such that
\begin{equation}\label{0-9}
\begin{split}
  &\E\big[|M_T-\wt M_t-\frac 12\la M\raa_T+\frac 12\la \wt M\raa_T|^q\big]\\
  &\leq C\big(\E[|M_T-\wt M_T|^q]+\frac 12\E[|\la M\raa_T-\la \wt M\raa_T|^q]\big)\\
  &\leq C\Big\{ \int_0^T\frac{\mu_0\big(|Z-\wt Z|^{q\xi}(s,\cdot)\big)^{\frac 1\xi}}{(1-\e^{-K_0s})^{\frac{d}\xi}}\,\d s+\Big(\int_0^T\frac{\mu_0\big(|Z-\wt Z|^{\gamma q\xi}(s,\cdot)\big)^{\frac 1\xi}}{(1-\e^{-K_0s})^{\frac d\xi}}\,\d s\Big)^{\frac 1\gamma}\Big\}
\end{split}
\end{equation}
Consequently, inserting \eqref{0-4}, \eqref{0-9} into \eqref{0-3}, we arrive at
\begin{equation}\label{0-10}
\begin{split}
&|\E f(X(t))-\E f(\wt X(t))|\\
&\leq C\Big\{ \int_0^T\frac{\mu_0\big(|Z-\wt Z|^{q\xi}(s,\cdot)\big)^{\frac 1\xi}}{(1-\e^{-K_0s})^{\frac{d}\xi}}\,\d s+\Big(\int_0^T\frac{\mu_0\big(|Z-\wt Z|^{\gamma q\xi}(s,\cdot)\big)^{\frac 1\xi}}{(1-\e^{-K_0s})^{\frac d\xi}}\,\d s\Big)^{\frac 1\gamma}\Big\}^{\frac 1{ q}}.
\end{split}
\end{equation}
Letting $\gamma \ra 1$ and $q\ra q_0=p_0/(p_0-1)$, we have
\begin{equation}\label{0-11}
|\E f(X(t))-\E f(\wt X(t))|\leq C\Big\{\int_0^T\!\frac{\mu_0(|Z-\wt Z|^{q_0\xi}(s,\cdot))^{\frac 1\xi}}{(1-\e^{-K_0s})^{\frac{d}{\xi}}}\d s\Big\}^{\frac 1{q_0}}.
\end{equation}
Taking the supremum of $f$ in \eqref{0-10} over all bounded measurable functions $f$ with $\|f\|_{\mathrm{Lip}}+\|f\|_\infty\leq 1$,
  the formula \eqref{bL-dis}  yields  \eqref{u-2} and hence completes the proof. \fin

In order to prove Theorem \ref{t-1} and Theorem \ref{t-2}, we have to make some prepartions concerning the representation \eqref{rep-EM}-\eqref{q2} of EM's approximation $(X_\de(t))$.
\begin{mylem}\label{t-3.2}Suppose $(H_\sigma)$ holds. Let $V\in\mathscr V$, $T>0$, $\beta>0$ be given.
Assume there exist $\eta_0>0$ and $\eta>2\beta \lambda T d$ such that
\[\mu_0(\e^{\eta_0|Z_0|^2})<\infty\quad \text{and}\quad \sup_{t\in[0,T]}\mu_0\big(\e^{\eta |Z(t,\cdot)|^2}\big)<\infty.\] Then
\begin{equation}\label{3.8}
\E\Big[\exp\Big(\beta\int_0^T |\sigma^{-1}(Z_0(Y(s))-b(s_\de,Y(s_\de)))|^2\d s\Big)\Big]<\infty
\end{equation}
if
\[\de<\frac{\eta_0}{8K_0\beta\lambda^2\gamma_0 T\eta_0+2\beta\lambda \gamma_0 K_0(T+1)d}\wedge 1,\]
where $\gamma_0=\frac{\eta}{\eta-2\beta\lambda T d}$. In particular, when $Z_0$ is bounded, then  \eqref{3.8} holds for all $\de\in (0,1)$.
\end{mylem}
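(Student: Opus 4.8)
The plan is to use the decomposition $b=Z+Z_0$, which gives
\[
Z_0(Y(s))-b(s_\de,Y(s_\de))=\big(Z_0(Y(s))-Z_0(Y(s_\de))\big)-Z(s_\de,Y(s_\de)).
\]
After bounding $|\sigma^{-1}(\cdot)|^2\le\lambda|\cdot|^2$ by $(\mathrm{H_\sigma})$, I would separate the singular term $Z(s_\de,Y(s_\de))$ from the ``discretization increment'' $Z_0(Y(s))-Z_0(Y(s_\de))$ via Young's inequality $|u-v|^2\le(1+\veps)|v|^2+(1+\veps^{-1})|u|^2$ for a parameter $\veps\in(0,1)$, and then apply H\"older's inequality inside the exponential with the conjugate exponents $p=\tfrac{\eta}{2\beta\lambda Td}$ and $q=\gamma_0=\tfrac{\eta}{\eta-2\beta\lambda Td}$ (which satisfy $\tfrac1p+\tfrac1q=1$, and are both $>1$ precisely because $\eta>2\beta\lambda Td$). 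This reduces \eqref{3.8} to bounding two exponential-moment factors.

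The first factor, $\E\big[\exp\big(p(1+\veps)\beta\lambda\int_0^T|Z(s_\de,Y(s_\de))|^2\,\d s\big)\big]$, is finite by the second estimate of Lemma \ref{t-3.1}(ii) applied to $G=|Z(t,\cdot)|^2$ with rate $p(1+\veps)\beta\lambda$: since $p(1+\veps)\beta\lambda Td=\tfrac{1+\veps}{2}\,\eta<\eta$ whenever $\veps<1$, the hypothesis $\sup_{t\le T}\mu_0(\e^{\eta|Z(t,\cdot)|^2})<\infty$ applies, and no smallness of $\de$ is needed. For the second factor, $\E\big[\exp\big(q(1+\veps^{-1})\beta\lambda\int_0^T|Z_0(Y(s))-Z_0(Y(s_\de))|^2\,\d s\big)\big]$, I would use that $Z_0$ is Lipschitz (since $V\in\mathscr V$), so that $|Z_0(Y(s))-Z_0(Y(s_\de))|^2$ is dominated by a constant times $|Y(s)-Y(s_\de)|^2$, and then, from \eqref{2.6} and the Cauchy--Schwarz inequality,
\[
|Y(s)-Y(s_\de)|^2\le 2(s-s_\de)\!\int_{s_\de}^s\!|Z_0(Y(r))|^2\,\d r+2|\sigma(W(s)-W(s_\de))|^2 .
\]
Integrating in $s$ over $[0,T]$, using $s-s_\de\le\de\le1$ and Fubini, this is bounded by $C\de^2\int_0^{T+1}|Z_0(Y(r))|^2\,\d r+C\lambda\int_0^T|W(s)-W(s_\de)|^2\,\d s$; a further Cauchy--Schwarz inequality then splits the second factor into a product of (i) $\E\big[\exp\big(C\de^2\int_0^{T+1}|Z_0(Y(r))|^2\,\d r\big)\big]$, which is finite by Lemma \ref{t-3.1}(ii) applied to $G=|Z_0|^2$ on the horizon $T+1$ as soon as $\eta_0>C\de^2(T+1)d$, using $\mu_0(\e^{\eta_0|Z_0|^2})<\infty$; and (ii) the purely Brownian factor $\E\big[\exp\big(C\int_0^T|W(s)-W(s_\de)|^2\,\d s\big)\big]$, which is finite once $C\de^2$ is below a universal constant, by the scaling identity $\int_{k\de}^{(k+1)\de}|W(s)-W(k\de)|^2\,\d s\overset{d}{=}\de^2\int_0^1|B(r)|^2\,\d r$, the independence of the increments over consecutive blocks, and the classical fact that $\E[\e^{\theta\int_0^1|B(r)|^2\d r}]<\infty$ for $\theta$ small. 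Collecting the resulting constraints on $\de$, tracking $K_0,\lambda,\beta,d,T,\eta_0$ and $\gamma_0$ (which enters through $q$), fixing $\veps$ admissibly, and bounding $\de^2\le\de$ for $\de\le1$, yields the stated range for $\de$. If $Z_0$ is bounded, the increment $Z_0(Y(s))-Z_0(Y(s_\de))$ is bounded by $2\norm{Z_0}_\infty$ deterministically, so the second factor is at most $\exp\big(4(1+\veps^{-1})\beta\lambda\norm{Z_0}_\infty^2T\big)<\infty$ with no constraint on $\de$, which gives the ``in particular'' assertion for all $\de\in(0,1)$. Multiplying the two finite factors proves \eqref{3.8}.

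The delicate step is the control of the second factor. Because $Z_0$ is only Lipschitz — not bounded in general — the crude estimate of the increment $Z_0(Y(s))-Z_0(Y(s_\de))$ reintroduces an integral of $|Z_0(Y(\cdot))|^2$ that must be re-absorbed through Lemma \ref{t-3.1}(ii) and the hypothesis $\mu_0(\e^{\eta_0|Z_0|^2})<\infty$, and it simultaneously produces a Brownian-increment exponential moment that forces $\de$ to be small; choosing the Young parameter $\veps$ and the H\"older pair $(p,q)$ so that the constraint $\veps<1$ from the first factor and these constraints from the second are simultaneously met is exactly what pins down the admissible threshold for $\de$.
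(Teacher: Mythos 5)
Your proposal is correct and follows essentially the same route as the paper: the same decomposition $Z_0(Y(s))-b(s_\de,Y(s_\de))=\big(Z_0(Y(s))-Z_0(Y(s_\de))\big)-Z(s_\de,Y(s_\de))$, the same H\"older splitting into a singular-drift factor handled by Lemma \ref{t-3.1}(ii) (with the delayed argument) and an increment factor handled via the Lipschitz property of $Z_0$, the SDE for $Y$, and exponential moments of $\int_0^T|W(s)-W(s_\de)|^2\,\d s$; your Young parameter $\veps$ and the block-scaling/independence argument for the Brownian factor are cosmetic variants of the paper's factor-$2$ bound and Jensen-plus-Gaussian computation, and only the exact numerical constant in the threshold for $\de$ would require redoing the paper's optimization over the inner H\"older exponents to reproduce verbatim. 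The one genuinely different step is the ``in particular'' clause: you bound the increment $Z_0(Y(s))-Z_0(Y(s_\de))$ deterministically by $2\|Z_0\|_\infty$, which removes every constraint on $\de$, whereas the paper lets $\eta_0\to\infty$ in the stated threshold --- which tends only to $\frac{1}{8K_0\beta\lambda^2\gamma_0 T}$ and so does not by itself cover all $\de\in(0,1)$ --- so your argument for that clause is the cleaner and more complete one.
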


\begin{proof}
  By H\"older's inequality, it holds
  \begin{equation}\label{3.9}
\begin{split}
  &\E\exp\Big[\beta\int_0^T|\sigma^{-1}(Z_0(Y(s))-b(s_\de,Y(s_\de)))|^2\d s\Big]\\
  &=\E\exp\Big[\beta\int_0^T |\sigma^{-1}(Z_0(Y(s))-Z_0(Y(s_\de))-Z(s_\de,Y(s_\de)))|^2\Big]\\
  &\leq \E\exp\Big[2\beta\lambda\int_0^T\big(|Z_0(Y(s))-Z_0(Y(s_\de))|^2+|Z(s_\de,Y(s_\de))|^2\big)\d s\Big]\\
  &\leq \Big\{\E\big[\e^{2\beta\lambda\gamma \int_0^T|Z_0(Y(s))-Z_0(Y(s_\de))|^2\d s}\big]\Big\}^{\frac 1\gamma}\Big\{\E\big[\e^{2\beta\lambda\gamma'\int_0^T|Z(s_\de,Y(s_\de))|^2\d s}\big]\Big\}^{\frac1{\gamma'}}\\
  &=:\mathbf{I}\cdot\mathbf{II},
  \end{split}
\end{equation} where $\gamma,\,\gamma'>1$ with $1/\gamma+1/\gamma'=1$.

First, let us consider the second term $\mathbf{II}$.
Under the condition that for some $\eta>\lambda T d$ such that
\[\sup_{t\in[0,T]}\mu_0\big(\e^{\eta |Z(t,\cdot)|^2}\big)<\infty,\]
Take  $\gamma'>1$ such that $2\beta\lambda \gamma' T d<\eta$.  According to \eqref{3.5} by replacing $G$ with $Z(t_\de,\cdot)$,
\[\E \e^{2\beta\lambda \gamma' |Z(t_\de,Y(t_\de))|^2}
\leq \Gamma(x)^{-\frac 1p}\frac{\e^{\frac{K}{p-\sqrt p} }}{(1-\e^{-K t_\de})^{d/p}}\sup_{t\in[0,T]}\mu_0\Big(\e^{2\beta\lambda\gamma'|Z(t_\de,\cdot)|^2}\Big)^{\frac 1p}<\infty, \quad t_\de >0.\]
Similar to \eqref{3.7} by replacing $\beta$ there with $2\beta\lambda\gamma'$, we   deduce that
\[\E\big[\e^{2\beta\lambda\gamma'\int_0^T|Z(s_\de,Y(s_\de))|^2\d s}\big]<\infty.\]
Therefore, the second term $\mathbf{II}<\infty$ if $1<\gamma'<\eta/(2\beta\lambda T d)$.

Next, we go to study the term $\mathbf{I}$.
For $s\in(0,T]$, \eqref{2.5} yields
\[Y(s)=Y(s_\de)+\int_{s_\de}^s Z_0(Y(r))\d r +\sigma (W(s)-W(s_\de)),
\]
then
\[|Y(s)-Y(s_\de)|^2\leq 2 \Big(\int_{s_\de}^s |Z_0(Y(r))|\d r\Big)^2+ 2\lambda|W(s)-W(s_\de)|^2.\]
As $V\in\mathscr V$,
\begin{equation}\label{3.10}
\begin{split}
  \mathbf{I}\,{}^\gamma&=\E\Big[\e^{2\beta\lambda \gamma\int_0^T |Z_0(Y(s))-Z_0(Y(s_\de))|^2\d s}\Big]
   \leq \E\Big[\e^{2\beta\lambda \gamma K_0\int_0^T|Y(s)-Y(s_\de)|^2\d s}\Big]\\
  &\leq \E\Big[\exp\Big(4\beta\lambda \gamma K_0\int_0^T\big(\int_{s_\de}^s|Z_0(Y(r))|^2\d r+\lambda |W(s)-W(s_\de)|^2 \big)\d s\Big)\Big]\\
  &\leq \E\Big[\e^{4p\beta\lambda \gamma K_0\int_0^T\big(\int_{s_\de}^s|Z_0(Y(r))|\d r\big)^2 \d s}\Big]^{\frac 1p}\cdot\E\Big[\e^{4q\beta\lambda^2 \gamma K_0\int_0^T|W(s)-W(s_\de)|^2\d s}\Big]^{\frac 1q},
\end{split}
\end{equation} where $p,\,q>1$ with $1/p+1/q=1$.

On one hand, direct calculation yields that
\begin{equation}\label{3.11}
\begin{aligned}
\E\Big[\e^{4p\beta\lambda \gamma K_0\int_0^T\big(\int_{s_\de}^s|Z_0(Y(r))|\d r\big)^2 \d s}\Big]
  &\leq \E\Big[\e^{4p\beta\lambda\gamma K_0\int_0^T(s-s_\de)\int_{s_\de}^s|Z_0(Y(r))|^2\d r \d s}\Big]\\
  &\leq \E\Big[\e^{4p\beta\lambda \gamma K_0\de(T+\de)\int_0^T|Z_0(Y(r))|^2 \d r}\Big].
\end{aligned}
\end{equation}
According to Lemma \ref{t-3.1}, if
\begin{equation}\label{3.12}
4p\beta\lambda \gamma K_0\de (T+\de) T d<  \eta_0,
\end{equation}
then it follows from $\mu_0(\e^{\eta_0 |Z_0|^2})<\infty$ that
\[\E\Big[\e^{4p\beta\lambda \gamma K_0\int_0^T\big(\int_{s_\de}^s|Z_0(Y(r))|\d r\big)^2 \d s}\Big]\leq \E\Big[\e^{4p\beta\lambda \gamma K_0\de(T+\de)\int_0^T|Z_0(Y(r))|^2 \d r}\Big]<\infty.\]

On the other hand, letting $\theta=4q\beta\lambda^2\gamma K_0$,
if
\begin{equation}\label{3.13}
2\de \theta T<1,
\end{equation}
then
\begin{equation}\label{3.14}
\begin{split}
  \E\Big[\e^{\theta\int_0^T|W(s)-W(s_\de)|^2\d s}\Big]&\leq \frac 1 T\int_0^T \E\Big[\e^{\theta T|W(s)-W(s_\de)|^2\d s}\Big]\d s\\
  &=\frac 1 T\int_0^T\int_{\R^d}\frac 1{(2\pi)^{d/2}}\e^{-\frac{(1-2(s-s_\de)\theta T)x^2}{2}}\d x \d s<\infty.
\end{split}
\end{equation}

In conclusion, in order to ensure $\mathbf{II}<\infty$,    $\gamma'$ should satisfy $1<\gamma'<\frac{\eta}{2\beta\lambda T d}$, which means further that $\gamma$  satisfies $\gamma>\gamma_0:=\frac{\eta}{\eta-2\beta\lambda T d}$. For the purpose that $\mathbf{I}<\infty$, $p,\,q$ must be chosen so that   \eqref{3.12} and \eqref{3.13} hold, that is,
\begin{equation}\label{3.15}
\de<\frac{\eta_0}{4p\beta\lambda \gamma_0 K_0(T+1)}\wedge 1,\quad \text{and}\quad \de<\frac{1}{8 q K_0 \beta\lambda^2\gamma_0 T}.
\end{equation}
Thus, $\de$ must satisfy
\[\de<\sup\Big\{ \frac{\eta_0}{4p\beta\lambda \gamma_0K_0(T+1)}\bigwedge \frac{1}{8q K_0\beta\lambda^2\gamma_0 T};\ p,q>1, \frac 1 p+\frac 1q=1\Big\}\wedge 1,\]
where $a\wedge b=\min\{a,b\}$. Taking the optimal choice of $p,q$ in the previous inequality, we get
\begin{equation}\label{3.16}\de< \frac{\eta_0}{8K_0\beta\lambda^2\gamma_0 T\eta_0+2\beta\lambda \gamma_0 K_0(T+1)d}\wedge 1.
\end{equation}
This means that when $\de$ satisfies \eqref{3.16}, there exists $p,\,q>1$ with $1/p+1/q=1$ such that $\mathbf{I}<\infty$. In all, when $\eta>2\beta\lambda T d$ and $\de$ satisfies \eqref{3.16}, there exist $\gamma>\gamma_0>1$ and $\gamma'=\gamma/(\gamma-1)$ such that $\mathbf{II}<\infty$ and $\mathbf{I}<\infty$. Then the desired estimate \eqref{3.8} follows from \eqref{3.9} immediately.

At last, when $Z_0$ is bounded, it holds  $\mu_0(\e^{\eta_0|Z_0|^2})<\infty$ for all $\eta_0>0$. Hence, for every $\de\in (0,1)$, \eqref{3.16} holds by taking $\eta_0$ large enough. Thus,  \eqref{3.8} holds.
\end{proof}

\begin{mylem}\label{t-3.4}
Assume that there exist $\eta_0>0$ and $\eta>4\lambda Td$ such that
\begin{equation}\label{3.4.1}
\mu_0\big(\e^{\eta_0|Z_0|^2}\big)<\infty\quad \text{and}\quad \sup_{0\leq t\leq T}\mu_0\big(\e^{\eta|Z(t,\cdot)|^2}\big)<\infty.
\end{equation}
Suppose
\[\de <\frac{\eta_0}{16K_0\lambda^2\gamma_0T\eta_0+4\lambda\gamma_0K_0(T+1)d}\wedge 1,
\]
where $\dis \gamma_0=\frac{\eta}{\eta-4\lambda Td}$.
Then, for every $\gamma>1$, EM's approximation $(X_\de(t))$ satisfies
\begin{equation}\label{3.4.2}
\E\big[\sup_{0\leq t\leq T}|X_\de(t)|^\gamma\big]<\infty.
\end{equation}
\end{mylem}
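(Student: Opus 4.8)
The plan is to rerun the argument of Lemma \ref{t-3.3}, but with the change of measure $\q_1$ replaced by the measure $\q_2$ attached to the Euler--Maruyama scheme. First I would observe that the hypotheses of the present lemma are, a fortiori, enough to apply Lemma \ref{t-3.2} with $\beta=\tfrac12$ (its $\de$-threshold being strictly less restrictive than the one assumed here), so Novikov's condition \eqref{Nov-2} holds; hence $\q_2$ in \eqref{q2} is a probability measure, $(\widehat W_2(t))_{t\le T}$ is a $\q_2$-Brownian motion, and by weak uniqueness of \eqref{2.2} the law of $(Y(t))_{t\le T}$ under $\q_2$ coincides with that of $(X_\de(t))_{t\le T}$ under $\p$. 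Therefore, for conjugate exponents $p,q>1$,
\begin{equation*}
\E\big[\sup_{0\le t\le T}|X_\de(t)|^{\gamma}\big]
=\E\Big[\frac{\d\q_2}{\d\p}\,\sup_{0\le t\le T}|Y(t)|^{\gamma}\Big]
\le \E\Big[\Big(\frac{\d\q_2}{\d\p}\Big)^{p}\Big]^{1/p}\,\E\big[\sup_{0\le t\le T}|Y(t)|^{\gamma q}\big]^{1/q},
\end{equation*}
and the second factor is finite for every $q>1$ by \cite[Theorem 2.4.4]{Mao}, since $Z_0$ is globally Lipschitz; this is exactly the estimate \eqref{3.20}.

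The core of the proof is the first factor. Writing $N_t=\int_0^t\la\sigma^{-1}(Z_0(Y(s))-b(s_\de,Y(s_\de))),\d W(s)\raa$ and $\la N\raa_T=\int_0^T|\sigma^{-1}(Z_0(Y(s))-b(s_\de,Y(s_\de)))|^2\d s$, one has $(\d\q_2/\d\p)^{p}=\exp(-pN_T-\tfrac p2\la N\raa_T)$, and the Cauchy--Schwarz splitting used to obtain \eqref{3.21} gives
\begin{equation*}
\E\Big[\Big(\frac{\d\q_2}{\d\p}\Big)^{p}\Big]\le \E\big[\exp(-2pN_T-2p^2\la N\raa_T)\big]^{1/2}\,\E\big[\exp(p(2p-1)\la N\raa_T)\big]^{1/2}.
\end{equation*}
Here $\exp(-2pN_t-2p^2\la N\raa_t)$ is a nonnegative local martingale, hence a supermartingale, so the first factor is at most $1$ (equivalently, Lemma \ref{t-3.2} with $\beta=2p^2$ supplies $\E[\exp(2p^2\la N\raa_T)]<\infty$, i.e. Novikov's criterion, making it a genuine martingale with the first factor equal to $1$). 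Lemma \ref{t-3.2} applied once more, now with $\beta=p(2p-1)$, makes the second factor finite. Both applications require $\eta>2\beta\lambda Td$, i.e. $4p^2\lambda Td<\eta$ and $2p(2p-1)\lambda Td<\eta$ respectively, together with an upper bound on $\de$ of the form \eqref{3.16} for the corresponding $\beta$. Since $\eta>4\lambda Td$, the two $\eta$-conditions hold for every $p>1$ sufficiently close to $1$; and since the right-hand side of \eqref{3.16} depends continuously on $\beta$ and, at $\beta=2$, equals precisely $\eta_0/(16K_0\lambda^2\gamma_0T\eta_0+4\lambda\gamma_0K_0(T+1)d)\wedge1$ with $\gamma_0=\eta/(\eta-4\lambda Td)$ — the very quantity that the hypothesis forces $\de$ to lie strictly below — the binding $\de$-constraint, the one with $\beta=2p^2\downarrow2$, is also met for all such $p$ close enough to $1$. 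Fixing one such $p$ completes the bound and proves \eqref{3.4.2}.

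I expect the only genuinely delicate point to be this final piece of bookkeeping: checking that the single $\de$-threshold stated in the lemma is exactly the $\beta=2$ instance of the condition in Lemma \ref{t-3.2}, and that the strictness of the assumed inequality leaves room to select an admissible exponent $p>1$ (the $\beta=p(2p-1)$ constraint being automatically weaker). Everything else — the Girsanov representation of $(X_\de(t))$ through $(Y(t))$, the polynomial moment bound for $(Y(t))$, and the exponential-moment control of $\la N\raa_T$ — is a direct transcription of Lemmas \ref{t-3.1}, \ref{t-3.2} and \ref{t-3.3}.
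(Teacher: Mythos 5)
Your proposal is correct and follows essentially the same route as the paper: the Girsanov representation of $(X_\de(t))$ via $(Y(t))$ under $\q_2$, the H\"older splitting into $\E[(\d\q_2/\d\p)^p]^{1/p}$ and the moment bound for $(Y(t))$, and then the Cauchy--Schwarz decomposition of \eqref{3.21} with Lemma \ref{t-3.2} replacing Lemma \ref{t-3.1} to control the exponential moments of $\la N\raa_T$. In fact you supply the bookkeeping (that the stated $\de$-threshold is the $\beta=2$ instance of \eqref{3.16} and that strictness leaves room to take $\beta=2p^2$ with $p>1$ close to $1$) which the paper explicitly omits.
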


\begin{proof}
  Let
  \begin{equation}\label{3.4.2.5}
  \begin{split}
  \widehat M_t&=-\int_0^t\la \sigma^{-1}\big(Z_0(Y(s))-b(s_\de,Y(s_\de))\big),\d W(t)\raa,\\
  \la\widehat M\raa_t&=\int_0^t\big|\sigma^{-1}\big(Z_0(Y(s))-b(s_\de,Y(s_\de))\big)\big|^2\d s, \quad t\in[0,T].
  \end{split}
  \end{equation}
  Under the hypothesis of this lemma, by virtue of Lemma \ref{t-3.2}, it holds
  \[\E\e^{\frac 12\la \,\widehat M\,\raa_t}<\infty,\quad t\in (0,T],\]
  which yields that $t\mapsto \exp\big(\widehat  M_t-\frac 12 \la \widehat  M\raa_t\big)$ is an exponential martingale for $t\in [0,T]$. Using the Girsanov theorem and  the representation \eqref{rep-EM} of $(X_\de(t))$ in terms of $(Y(t))$, we have that $(X_\de(t))_{t\leq T}$ under $\p$ admits the same distribution as $(Y(t))_{t\leq T}$ under $\q_2$.
  Consequently,
  \begin{equation}\label{3.4.3}
  \begin{split}
    \E\big[\sup_{0\leq t\leq T}|X_\de(t)|^\gamma\big]&=\E_{\q_2}\big[\sup_{0\leq t\leq T}|Y(t)|^\gamma\big]=\E\Big[\frac{\d \q_2}{\d \p}\cdot\sup_{0\leq t\leq T}|Y(t)|^{\gamma}\Big]\\
    &\leq \E\Big[\Big(\frac{\d \q_2}{\d \p}\Big)^p\Big]^{\frac 1p}\,\E\Big[\sup_{0\leq t\leq T}|Y(t)|^{\gamma q}\Big]^{\frac 1q},\quad p,q>1,\ \frac 1p+\frac 1q=1.
  \end{split}
  \end{equation}
 Then, following the same line of the proof of Lemma \ref{t-3.3}, whereas in this situation applying Lemma \ref{t-3.2} instead of Lemma \ref{t-3.1}, we can prove \eqref{3.4.2}. The details are omitted.
\end{proof}

\noindent\textbf{Proof of Theorem \ref{t-1}}\ Under the hypothesis of this theorem, $(X(t))_{t\leq T}$ under $\p$ has the same distribution as $(Y(t))$ under $\q_1$, and $(X_\de(t))_{t\leq T}$ under $\p$ has the same distribution as $(Y(t))$ under $\q_2$.
Therefore, for every bounded measurable function $f$ on $\R^d$,
\begin{equation}\label{M-1}
\begin{split}
  |\E f(X(t))-\E f(X_\de(t))|&=|\E_{\q_1} f(Y(t))-\E_{\q_2}f(Y(t))|\\
  &\leq K_f\E\Big[\Big|\frac{\d\q_1}{\d \p}-\frac{\d \q_2}{\d \p}\Big|\Big],
\end{split}
\end{equation}
where $K_f=\sup\{|f(x)|;\,x\in \R^d\}<\infty$. Recall that $M_T$, $\widehat M_T$ be defined  by \eqref{3.18.5} and \eqref{3.4.2.5} respectively.
By the inequality $|\e^x-\e^y|\leq (\e^x+\e^y)|x-y|$ for all $x,\,y\in\R$, we get
\begin{equation}\label{M-2}
\begin{split}
  \E\Big[\Big|\frac{\d \q_1}{\d \p}-\frac{\d \q_2}{\d \p}\Big|\Big]&\leq
  \E\Big[\Big(\frac{\d \q_1}{\d \p}+\frac{\d \q_2}{\d \p}\Big)\big|M_T-\tilde M_T-\frac 12 \la M\raa_T+\frac 12\la \tilde M\raa_T\big|\Big]\\
  &\leq \E\Big[\Big(\frac{\d \q_1}{\d \p}+\frac{\d \q_2}{\d \p}\Big)^p\Big]^{\frac 1p}\E\big[ \big|M_T-\tilde M_T-\frac 12 \la M\raa_T+\frac 12\la \tilde M\raa_T\big|^q\big]^{\frac 1q}
\end{split}
\end{equation} for $p,\,q>1$ with $1/p+1/q=1$.

According to the hypothesis $\eta>4\lambda Td$, there exists $p_0>1$ such that
$ 4p_0^2\lambda T d<\eta$.
Invoking the estimate \eqref{3.22}, we know that
\[\E\Big[\Big(\frac{\d \q_1}{\d \p}\Big)^{p_0}\Big]<\infty.\]
Furthermore, by H\"older's inequality,
\[\E\Big[\Big(\frac{\d \q_2}{\d \p}\Big)^{p_0}\Big] \leq \E\big[\exp\big(2p_0\tilde M_T-2p_0^2\la \tilde M\raa_T\big)\big]^{\frac 12}\E\big[\exp\big(p_0(2p_0-1)\la \tilde M\raa_T\big)\big]^{\frac 12}.\]
Due to Lemma \ref{t-3.2}, for $\de>0$ satisfying
\[\de<\frac{\eta_0}{16K_0p_0^2\lambda^2\gamma_0 T\eta_0+4p_0^2\lambda \gamma_0 K_0(T+1)d}\wedge 1,\]
it holds
\[\E\Big[\Big(\frac{\d \q_2}{\d \p}\Big)^{p_0}\Big]<\infty.\]

Next, we proceed to show that for any $q'>1$
\begin{equation}\label{M-3}
\E\big[|M_T-\tilde M_T-\frac 12\la M\raa_T+\frac 12\la \tilde M\raa_T|^{q'}\big]<\infty.
\end{equation}
Indeed, for any $\veps>0$, $q'>1$, there exists a constant $C=C(q',\veps)$ such that
$|y|^{q'}\leq C\e^{\veps |y|}$ for all $ y\in\R$.
Hence, by Burkholder-Davis-Gundy's inequality,
\begin{equation}\label{M-4}
\begin{split}
  &\E\big[|M_T-\tilde M_T-\frac 12\la M\raa_T+\frac 12\la \tilde M\raa_T|^{q'}\big]\\
  &\leq C\big(\E[|M_T|^{q'}]+\E[|\tilde M|^{q'}]+\frac 12\E[\la M\raa_T^{q'}]+\frac 12\E[\la \tilde M\raa_T^{q'}]\big)\\
  &\leq C\big(\E[\la M\raa_T^{q'/2}]+\E[\la \tilde M\raa_T^{q'/2}]+ \E[\la M\raa_T^{q'}]+ \E[\la \tilde M\raa_T^{q'}] \big)\\
  &\leq C\big(\E\e^{\veps \la M\raa_T} +\E\e^{\veps \la \tilde M\raa_T}\big).
\end{split}
\end{equation}
By virtue of Lemma \ref{t-3.1} and Lemma \ref{t-3.2}, for $0<\veps<\frac{\eta}{4\lambda T d}$ and
\[\de<\frac{\eta_0}{16K_0\veps \lambda^2\gamma_0T\eta_0+4\veps \lambda\gamma_0 K_0(T+1)d}\wedge 1,\]
we have
$\E\e^{\veps \la M\raa_T}<\infty$ and $\E\e^{\veps \la \tilde M\raa_T}<\infty$, which yields \eqref{M-3} by \eqref{M-4}.

Since the pathes of the process $(Y(t))$ are almost surely continuous, by the continuity of $x\mapsto Z_0(x)$ and $(t,x)\mapsto b(t,x)$,
\begin{align*}
  \lim_{\de\downarrow 0} \int_0^T\!\la \sigma^{-1}Z(s,Y(s)),\d W(s)\raa\!-\!\int_0^T\!\la\sigma^{-1}(Z_0(Y(s))\!-\!b(s_\de,Y(s_\de))),\d W(s)\raa =0,\ \ \text{a.s.}
\end{align*}
and
\begin{equation*}
\lim_{\de\downarrow 0} \int_0^T\Big(|\sigma^{-1} Z_0(Y(s))|^2-\big|\sigma^{-1}\big(Z_0(Y(s))-b(s_\de, Y(s_\de))\big)\big|^2\Big)\d s=0, \quad\text{a.s.}
\end{equation*}
Furthermore, \eqref{M-3} implies the uniform integrability of $ M_T-\tilde M_T-\frac 12\la M\raa_T+\frac 12\la \tilde M\raa_T$ w.r.t. $\de>0$. Applying \eqref{M-2} with $p=p_0$, we obtain
\begin{equation}\label{M-5}
  \lim_{\de\downarrow 0} \E\Big[\Big|\frac{\d \q_1}{\d \p}-\frac{\d \q_2}{\d \p}\Big|\Big]=0,
\end{equation}
and further,   by \eqref{M-1},
\begin{equation*}
  \lim_{\de\downarrow 0}|\E f(X(t))-\E f(X_\de(t))|=0.
\end{equation*}
Due to \eqref{M-1} and   \eqref{bL-dis}, \eqref{M-5} yields further
\[\lim_{\de\downarrow 0} W_{bL}(\mathscr L(X_\de(t)),\mathscr L(X(t)))=0,\quad t\in [0,T].\]
This concludes the proof. \fin

\noindent\textbf{Proof of Theorem \ref{t-2}}\ For bounded measurable function $f$ with $\|f\|_\infty\leq  1$, by \eqref{M-1}
\begin{equation}\label{j-1}
\begin{split}
  &|\E f(X(t))-\E f(X_\de(t))|\leq \E\Big[\Big|\frac{\d \q_1}{\d \p}-\frac{\d \q_2}{\d \p}\Big|\Big]\\
  &\leq  \E\Big[\Big(\frac{\d \q_1}{\d \p}+\frac{\d \q_2}{\d \p}\Big)^p\Big]^{\frac 1p}\E\big[|M_T-\tilde M_T-\frac 12\la M\raa_T+\frac 12 \la \tilde M\raa_T|^q\big]^{\frac 1q},
\end{split}
\end{equation}where $p,q>1$, $1/p+1/q=1$,  $M_T$, $\tilde M_T$ are defined in the proof of Theorem \ref{t-1}. As shown in the argument of Theorem \ref{t-1}, there exists $p_0>1$ satisfying
$2p_0^2\lambda Td<\eta_0$ such that
\[\E\Big[\Big(\frac{\d \q_1}{\d \p}\Big)^{p_0}\Big]<\infty,\quad \E\Big[\Big(\frac{\d\q_2}{\d \p}\Big)^{p_0}\Big]<\infty,
\] if $\de$ is sufficiently so that
\[\de<\frac{\eta_0}{16K_0p_0^2\lambda^2\gamma_0 T\eta_0+4p_0^2\lambda \gamma_0 K_0(T+1)d}\wedge 1, \quad \text{where}\ \gamma_0=\frac{\eta}{\eta-4p_0^2\lambda T d}. \]
Set $q_0=p_0/(p_0-1)$. Without loss of generality, $p_0$ can be taken sufficiently small so that $q_0>2$.  Therefore, the convergence rate is determined by the term
\begin{align*}
&\E\big[|M_T-\tilde M_T-\frac 12\la M\raa_T+\frac 12 \la \tilde M\raa_T|^{q_0}\big]^{\frac 1{q_0}}\\
&=\E\Big[\Big(\int_0^T\la \sigma^{-1}\big(Z(s,Y(s))-Z(s_\de,Y(s_\de))-Z_0(Y(s))+Z_0(Y(s_\de))\big),\d W(s)\raa\\
&\quad -\frac 12\! \int_0^T\!\!|\sigma^{-1}Z(s,Y(s))|^2\d s\!+\!\frac 12\int_0^T\!\!\!|\sigma^{-1}\big(Z_0(Y(s))\!-\!Z_0(Y(s_\de))\!-\!Z(s_\de,Y(s_\de))\big)|^2\d s\Big)^{q_0}\Big]^{\frac 1{q_0}}.
\end{align*}

In what follows, we use $C$ to denote a generic positive constant, whose value may be different from line to line.

First, we estimate the term $\E[|M_T-\tilde M_T|^{q_0}]$, and the term $\E[|\la M\raa_T-\la \tilde M\raa_T|^{q_0}]$ will be estimated in next step.  By $\mathrm{(H_\sigma)}$ and Burkholder-Davis-Gundy's inequality,
\begin{equation}\label{j-2}
\begin{split}
&\E\big[|M_T-\tilde M_T|^{q_0}\big]\\&\leq C\lambda^{\frac{q_0}{2}}
\E\Big[\Big(\int_0^T\!\!\big|Z(s,Y(s))-Z(s_\de,Y(s_\de))-Z_0(Y(s))+Z_0(Y(s_\de))\big|^2\d s\Big)^{\frac{q_0}{2}}\Big]\\
&\leq C\lambda^{\frac{q_0}{2}}\E\Big[\Big(\int_0^T\!\!|Z(s,Y(s))\!-\!Z(s_\de,Y(s_\de))|^2\d s\Big)^{\frac{q_0}{2}}\!\!+\!\Big( \!\int_0^T \!\!|Z_0(Y(s))\!-\!Z_0(Y(s_\de))|^2\Big)^{\frac{q_0}{2}}\Big].
\end{split}
\end{equation}
Put $N_T=[ T/\de]$, $t_k=k\de$ for $k=0,\ldots,N_T$, and $t_{N_T+1}=T$ for convenience of notation.
By Jensen's inequality,
\begin{equation}\label{j-3}
\begin{aligned}
  &\E\Big[\Big(\int_0^T|Z_0(Y(s))-Z_0(Y(s_\de))|^2\d s\Big)^{\frac{q_0}{2}}\Big]\\
  &\leq T^{\frac{q_0}{2}-1}\E\Big[\int_0^T|Z_0(Y(s))-Z_0(Y(s_\de))|^{q_0}\d s\Big]\\
  &\leq T^{\frac{q_0}{2}-1}\E\Big[\sum_{k=0}^{N_T}\int_{t_k}^{t_{k+1}} |Z_0(Y(s))-Z_0(Y(s_\de))|^{q_0}\d s\Big].
\end{aligned}
\end{equation}
Denote by $\rho(s,x,y)$ the density of $\p(Y(s)\in\d y|Y(0)=x)$. According to the classical theory of heat kernel estimate (cf. e.g. \cite{Ar,CHXZ,ZhQ}), when $\de$ is small enough, for $s\in (k\de,(k+1)\de)$, there exist constants $c_1,c_2>0$ such that
\begin{equation}\label{j-4}
c_1(2\pi(s-k\de))^{-\frac{d}{2}}\e^{\frac{|y|^2}{2(s-k\de)}}\leq \rho(s-k\de,x,x+y)\leq c_2(2\pi(s-k\de))^{-\frac{d}{2}}\e^{\frac{|y|^2}{2(s-k\de)}},\quad x,y\in\R^d.
\end{equation}
This yields that for every $\gamma>1$, $s\in (k\de, (k+1)\de]$,
\begin{equation}\label{j-5}
\begin{split}
\int_{\R^d}\!|y|^\gamma\rho(s-k\de,x,x+y)\d y&\leq C\int_{\R^d}\! \frac{|y|^\gamma }{(2\pi(s-k\de))^{d/2}}\e^{-\frac{|y|^2}{2(s-k\de)}}\d y
\leq C (s-k\de)^{\frac{\gamma}{2}}.
\end{split}
\end{equation}
Due to  \eqref{j-5}, it follows from \eqref{j-3} that
\begin{equation}\label{j-6}
\begin{split}
  &\E\Big[\Big(\int_0^T|Z_0(Y(s))-Z_0(Y(s_\de))|^2\d s\Big)^{\frac{q_0}{2}}\Big]\\
  &\leq T^{\frac{q_0}{2}-1}K_0\sum_{k=0}^{N_T}\int_{t_k}^{t_{k+1}}\E\big[|Y(s)-Y(s_\de)|^{q_0}\big]\d s\\
  &= T^{\frac{q_0}{2}-1}K_0\sum_{k=0}^{N_T}\int_{t_k}^{t_{k+1}}\!\!\!\int_{\R^d}\!\int_{\R^d} |y|^{q_0} \rho(k\de,x_0,x)\rho(s\!-\!k\de,x,x\!+\!y)\d x\d y \d s\\
  &\leq C T^{\frac{q_0}{2}-1}K_0\sum_{k=0}^{N_T}\int_{t_k}^{t_{k+1}}(s-t_k)^{\frac{q_0}{2}}\d s\\
  &\leq C(T, K_0, q_0) \de^{\frac{q_0}{2}},
\end{split}
\end{equation} where we denote $Y(0)=x_0$ and use the homogeneity of the process $(Y(t))$.

We proceed to prove
\begin{equation}\label{j-7}
  \E\Big[\Big(\int_0^T\!\!|Z(s,Y(s))\!-\!Z(s_\de,Y(s_\de))|^2\d s\Big)^{\frac{q_0}{2}}\Big]\leq C \de^{\frac{q_0}{2}\wedge (\alpha q_0)}.
\end{equation}
In fact, by Jensen's inequality,
\begin{equation}\label{j-8}
\begin{split}
  &\E\Big[\Big(\int_0^T\!\!|Z(s,Y(s))\!-\!Z(s_\de,Y(s_\de))|^2\d s\Big)^{\frac{q_0}{2}}\Big]\\
  &\leq T^{\frac{q_0}{2}-1}\sum_{k=0}^{N_T}\!\!\int_{t_k}^{t_{k+1}}
  \E\big[|Z(s,Y(s))-Z(s_\de,Y(s_\de))|^{q_0}\big]\d s\\
  &\leq C T^{\frac{q_0}{2}-1}\sum_{k=0}^{N_T}\int_{t_k}^{t_{k+1}}\!\!\E\big[|Z(s,Y(s))\!-\!Z(s_\de,Y(s))|^{q_0}
  \!+\!|Z(s_\de,Y(s))\!-\!Z(s_\de,Y(s_\de))|^{q_0}\big]\d s.
\end{split}
\end{equation}
By virtue of \eqref{2.8} and  \eqref{j-5},
\begin{equation}\label{j-9}
\begin{split}
  &\E\big[|Z(k\de,Y(s))-Z(k\de,Y(k\de))|^{q_0}\big]\\
  &\leq CK_1^{q_0}\!\int_{\R^d}\!\int_{\R^d}(1\!+\!|x|^{m_1}\!+\!|x+y|^{m_1})^{q_0}|y|^{q_0}
  \rho(k\de,x_0,x)\rho(s-k\de,x,x+y)\d x\d y\\
  &\leq CK_1^{q_0}\!\int_{\R^d}\!\int_{\R^d}\!\!\big(|y|^{q_0}\!+\!|x|^{m_1q_0}|y|^{q_0}\!+\!|y|^{(m_1+1)q_0}\big)
  \rho(k\de,x_0,x)\rho(s\!-\!k\de,x,x\!+\!y)\d x\d y\\
  &\leq CK_1^{q_0}(s-k\de)^{\frac{q_0}2}\int_{\R^d}\big(1+|x|^{m_1q_0}\big)\rho(k\de,x_0,x)\d x\\
  &= C K_1^{q_0}(s-k\de)^{\frac{q_0}{2}}\big(1+\E\big[|Y(k\de)|^{m_1q_0}\big]\big)\\
  &\leq CK_1^{q_0}(s-k\de)^{\frac{q_0}{2}},
\end{split}
\end{equation} where in the last step we have used the fact
$\E\big[\sup_{0\leq t\leq T} |Y(t)|^{\gamma}\big]<\infty$
for every $\gamma>1$ (cf. \cite[Theorem 2.4.4]{Mao}). Moreover, applying \eqref{2.8} again, we get
\begin{equation}\label{j-10}
\begin{split}
&\E\big[|Z(s,Y(s))-Z(k\de,Y(s))|^{q_0}\big] \leq (s-k\de)^{\alpha q_0}\E\big[h(Y(s))^{q_0}\big]\leq C(s-k\de)^{\alpha q_0},
\end{split}
\end{equation} where in the last step we used the fact $h$ is polynomial bounded and $\E\big[\sup_{0\leq t\leq T} |Y(t)|^{\gamma}\big]<\infty$
for every $\gamma>1$.
Combining \eqref{j-10}, \eqref{j-9} with \eqref{j-8}, we arrive at \eqref{j-7}.

Consequently, inserting \eqref{j-7} and \eqref{j-6} into \eqref{j-2}, we obtain
\begin{equation}\label{j-11}
\E[|M_T-\tilde M_T|^{q_0}]\leq C(\lambda,T, K_0,K_1,q_0) \de^{\frac{q_0}{2}\wedge(\alpha q_0)}.
\end{equation}

Next, we go to estimate the term $\E\big[|\la M\raa_T-\la \tilde M\raa_T|^{q_0}\big]$.
We have
\begin{align*}
  &\E\big[|\la M\raa_T-\la \tilde M\raa_T|^{q_0}\big]\\
  &=\E\Big[\Big|\int_0^T|\sigma^{-1}Z(s,Y(s))|^2-
  |\sigma^{-1}(Z_0(Y(s))-Z_0(Y(s_\de))-Z(s_\de,Y(s_\de)))|^2\d s\Big|^{q_0}\Big]\\
  &\leq \E\Big[\Big(\int_0^T\big(\big|\sigma^{-1}Z(s,Y(s))\big|
  +\big|\sigma^{-1}\big(Z_0(Y(s))-Z_0(Y(s_\de))-Z(s_\de,Y(s_\de))\big)\big|\big)\\
  &\quad\cdot
  \big||\sigma^{-1}(Z(s,Y(s))-Z(s_\de,Y(s_\de)))|+|\sigma^{-1}(Z_0(Y(s))-Z_0(Y(s_\de)))|\big|\d s\Big)^{q_0}\Big]\\
  &\leq C\E\big[\big(\la M\raa_T+\la \tilde M\raa_T\big)^{\frac{q_0\gamma}{2}}\big]^{\frac1{\gamma}}\\
  &\quad
  \cdot \E\Big[\Big(\int_0^T\!\big(|\sigma^{-1}(Z(s,Y(s))\!-\!Z(s_\de,Y(s_\de)))|
  \!+\!|\sigma^{-1}(Z_0(Y(s))\!-\!Z_0(Y(s_\de)))|\big)^{2}\d s\Big)^{\frac{q_0\gamma'}{2}}\Big]^{\frac{1}{\gamma'}},
\end{align*}
where $\gamma,\gamma'>1$, $1/\gamma+1/\gamma'=1$. Similar to the deduction of \eqref{M-4}, we have for every $\gamma>1$,
\[\E\big[\big(\la M\raa_T+\la \tilde M\raa_T\big)^{\frac{q_0\gamma}{2}}\big]<\infty.\]
Meanwhile, similar to the estimates \eqref{j-6} and \eqref{j-7}, we can obtain that
\begin{equation}\label{j-12}
\begin{split}
  &\E\Big[\Big(\int_0^T|\sigma^{-1}(Z_0(Y(s))-Z_0(Y(s_\de)))|^2\d s\Big)^{\frac{q_0\gamma'}{2}}\Big]\leq C\de^{\frac{q_0\gamma'}{2}},\\
  &\E\Big[\Big(\int_0^T|\sigma^{-1}(Z(s,Y(s))-Z(s_\de,Y(s_\de)))|^2\d s\Big)^{\frac{q_0\gamma'}{2}}
  \Big]\leq C\de^{\frac{q_0\gamma'}{2}\wedge (\alpha q_0\gamma')}.
\end{split}
\end{equation}
Therefore,
\begin{equation}\label{j-13}
 \E\big[|\la M\raa_T-\la \tilde M\raa_T|^{q_0}\big]\leq C\de^{\frac{q_0}{2}\wedge (\alpha q_0)}.
\end{equation}

In all, inserting the estimate \eqref{j-12} and \eqref{j-13} into \eqref{j-1}, we conclude that
\[\sup_{0\leq t\leq T} |\E f(X(t))-\E f(X_\de(t))|\leq C \de^{\frac 12\wedge \alpha},\]
which is the desired result. \fin

\subsection{Degenerate Case}
Our method to deal with EM's approximation for non-degenerate SDEs can be extended to deal with the degenerate system \eqref{deg-1}.
The basic idea is still to apply the Girsanov theorem to provide another representation of the solutions to SDEs \eqref{deg-1}, \eqref{deg-2}, and \eqref{deg-5} and use the Harnack inequality to verify Novikov's condition.

Now, we introduce the following auxiliary process:
\begin{equation}\label{edeg-1}
\begin{split}
  \d X(t)&=Y(t)\d t,\\
  \d Y(t)&=Z_0(Y(t))\d t+\sigma \d W(t),
\end{split}
\end{equation} where $Z_0$ is the vector corresponding to $V\in \mathscr V$ determined by \eqref{2.3}. Note that this process $(Y(t))$ is the same as the one used in non-degenerate case; however, the process $(X(t))$ is different.  Next, we rewrite \eqref{edeg-1} into three different forms to provide another representation of \eqref{deg-1}, \eqref{deg-2} and \eqref{deg-5} respectively. First, we have
\begin{equation}\label{edeg-2}
\begin{split}
  \d X(t)&= Y(t)\d t,\\
  \d Y(t)&= b(t,X(t), Y(t))\d t+\sigma \d \widehat W_3(t),
\end{split}
\end{equation}
where
\begin{equation}\label{edeg-3}
\widehat W_3(t)=W(t)-\int_0^t\sigma^{-1}Z(s,X(s),Y(s))\d s,\quad t>0.
\end{equation}
Setting
\[\q_3=\exp\Big[\int_0^T \la\sigma^{-1}Z(s,X(s),Y(s)),\d W(s)\raa-\frac 12\int_0^T|\sigma^{-1}Z(s,X(s),Y(s))|^2\d s\Big]\p,\]
if Novikov's condition
\begin{equation}\label{edeg-4}
\E\exp\Big[\int_0^T|\sigma^{-1}Z(s,X(s),Y(s))|^2\d s\Big]<\infty
\end{equation}
holds, $\q_3$ is a probability measure and $(\widehat W_3(t))_{t\in[0,T]}$ is a new Brownian motion under $\q_3$. Moreover, $(X(t),Y(t))$ will be a solution of SDE \eqref{deg-1} under the probability $\q_3$.

Second, let us rewrite \eqref{edeg-1} into the following form:
\begin{equation}\label{edeg-5}
\begin{split}
  \d X(t)&=Y(t)\d t,\\
  \d Y(t)&=\tilde b(t ,X(t),Y(t))\d t+\sigma \d \widehat W_4(t),
\end{split}
\end{equation}
where
\begin{equation}\label{edeg-6}
\widehat W_4(t)=W(t)-\int_0^t\sigma^{-1}\wt Z(s,X(s),Y(s))\d s,\quad t>0.
\end{equation}
If
\begin{equation}\label{edeg-7}
\E\exp\Big[\int_0^T |\sigma^{-1}\wt Z(s,X(s),Y(s))|^2\d s\Big]<\infty,
\end{equation}
then
\begin{equation}\label{edeg-8}
\q_4:=\exp\Big[\int_0^T \la \sigma^{-1}\wt Z(s,X(s),Y(s)),\d W(s)\raa-\frac 12\int_0^T|\sigma^{-1}\wt Z(s,X(s),Y(s))|^2\d s\Big]\p
\end{equation} is a probability measure, and further $(W_4(t))_{t\in[0,T]}$ is a new Brownian motion under $\q_4$. Furthermore, $(X(t),Y(t))$ is a solution to SDE \eqref{deg-2} under the probability measure $\q_4$.

At last, let us rewrite \eqref{edeg-1} into the form:
\begin{equation}\label{edeg-9}
\begin{split}
  \d X(t)&=Y(t)\d t,\\
  \d Y(t)&=b(t_\de,X(t_\de),Y(t_\de))\d t+\sigma \d \widehat W_5(t),
\end{split}
\end{equation}
where
\begin{equation}\label{edeg-10}
\widehat W_5(t)=W(t)+\int_0^t \sigma^{-1}\big(Z_0(Y(s))-b(s_\de, X(s_\de),Y(s_\de))\big)\d s,\quad t>0.
\end{equation}
If
\begin{equation}\label{edeg-11}
\E\exp\Big[\int_0^T\big|\sigma^{-1}\big(Z_0(Y(s))-b(s_\de, X(s_\de),Y(s_\de))\big)\big|^2\d s\Big]<\infty,
\end{equation}
then
\begin{equation}\label{edeg-12}
\begin{split}
\q_5&:=\exp\Big[\int_0^T \la \sigma^{-1}\big(Z_0(Y(s))-b(s_\de, X(s_\de),Y(s_\de))\big),\d W(s)\raa\\
&\qquad \qquad -\frac 12\int_0^T\big|\sigma^{-1}\big(Z_0(Y(s))-b(s_\de, X(s_\de),Y(s_\de))\big)\big|^2\d s\Big]\p
\end{split}
\end{equation} is a probability measure, and $(W_5(t))_{t\in[0,T]}$ is a new Brownian motion under $\q_5$. Thus, $(X(t),Y(t))$ is a solution to SDE \eqref{deg-5} under $\q_5$.

Due to the conditions (A1) and (A2), using Lemmas \ref{t-3.1} and \ref{t-3.2}, we can check that \eqref{edeg-4}, \eqref{edeg-7}, \eqref{edeg-11} hold. Invoking the previous representation of the corresponding degenerate SDEs, we can prove Theorems \ref{t-deg-1}, \ref{t-deg-2} and \ref{t-deg-3} by the same method as that of Theorems \ref{t-0}, \ref{t-1} and \ref{t-2} respectively. The details are omitted.

\end{document}